%-----------------------------------------------------------------------
% Beginning of article-template.tex
%-----------------------------------------------------------------------
%
%    This is a template file for proceedings articles prepared with AMS
%    author packages, for use with AMS-LaTeX.
%
%    Templates for various common text, math and figure elements are
%    given following the \end{document} line.
%
%%%%%%%%%%%%%%%%%%%%%%%%%%%%%%%%%%%%%%%%%%%%%%%%%%%%%%%%%%%%%%%%%%%%%%%%

%    Remove any commented or uncommented macros you do not use.

%    Replace amsproc by the name of the author package.
\documentclass[reqno]{amsproc}

%    If you need symbols beyond the basic set, uncomment this command.
%\usepackage{amssymb}

%    If your article includes graphics, uncomment this command.
%\usepackage{graphicx}

%    If the article includes commutative diagrams, ...
%\usepackage[cmtip,all]{xy}

%    Include other referenced packages here.
\usepackage{amsfonts}
\usepackage{amsmath}
\usepackage{amssymb}
\usepackage{enumerate}

%    Update the information and uncomment if AMS is not the copyright
%    holder.
%\copyrightinfo{2009}{American Mathematical Society}

\newtheorem{theorem}{Theorem}
\newtheorem{lemma}{Lemma}
\newtheorem{corollary}{Corollary}

\theoremstyle{definition}
\newtheorem{definition}{Definition}

\theoremstyle{remark}
\newtheorem{remark}{Remark}

\numberwithin{equation}{section}

\def\be{\begin{equation}}
\def\ee{\end{equation}}
\def\ben{\begin{displaymath}}
\def\een{\end{displaymath}}
\def\baa{\begin{eqnarray}}
\def\eaa{\end{eqnarray}}

\def\ba{\begin{array}}
\def\ea{\end{array}}
\def\la{\label}
\def\p{\partial}

\def\strat{{\mathcal H}_{\gh}([2^{\gh-1}])}
\def\C{{\mathbb C}}
\def\R{{\mathbb R}}
\def\Z{{\mathbb Z}}
\def\Q{{\mathbb Q}}

\def\H{{\mathcal H}}

\def\ph{\widehat{p}}

\def\Oh{\widetilde{\phi}}

\def\Mb{\overline{{\mathcal M}}}

\def\2x2{{\left(\!\!\begin{array}{cc}a&b\\c&d\\\end{array}\!\!\right)}}

\def\f{\frac}
\def\e{\epsilon} 

\def\d{\delta}
\def\a{\alpha}
\def\b{\beta}
\def\s{\sigma}

\def\O{\Omega}

\def\Ch{\widehat{C}}
\def\gh{\widehat{g}}
\def\Bh{{\widehat B}}
\def\l{\lambda}

\def\sh{\widehat{\sigma}}
\def\thetah{\widehat{\theta}}
\def\Eh{\widehat{E}}
\def\Ah{{\widehat{{\mathcal A}}}}
\def\Kh{\widehat{K}}
\def\Wh{\widehat{W}}
\def\thetah{\widehat{\theta}}
\def\Zh{{\widehat{Z}}}
\def\zetah{\widehat{\zeta}}
\def\wO{{\widehat{\Omega}}}

\def\Bt{{\widetilde{B}}}

\def\pt{\lambda_{P}}
\def\at{\tilde{a}}
\def\bt{\tilde{b}}
\def\ut{\tilde{u}}
\def\vt{\tilde{v}}

\def\bu{{\bf  u}}

\def\dim{{\rm dim}}

\def\Bh{{\widehat{B}}}

\def\bu{\pmb{u}}

\def\qb{{\xi}}

\def\tauh{\widehat{\tau}}
\def\taut{\widetilde{\tau}}
\def\gh{\widehat{g}}

\def\Mgn{\mathcal{Q}_{g}}
\def\Mgno{\overline{\mathcal{Q}}_{g}}

\def\Ct{C_{\gh}}
\def\Cb{\overline{{\mathcal C}}}
\def\Qc{{\mathcal Q}}
\def\Ct{{\widetilde C}}

\begin{document}

% \title[short text for running head]{full title}
\title[Tau function and the Prym class]{Tau function and the Prym class}

%    Only \author and \address are required; other information is
%    optional.  Remove any unused author tags.

%    author one information
% \author[short version for running head]{name for top of paper}
\author{D.~Korotkin}
\address{Department of Mathematics and Statistics, Concordia University,
1455 de Maisonneuve West, Montreal, H3G 1M8  Quebec,  Canada}
\email{korotkin@mathstat.concordia.ca}
\thanks{DK was partially supported by NSERC, FQRNT and CURC}

%    author two information
\author{P.~Zograf}
\address{Steklov Mathematical Institute, Fontanka 27, St. Petersburg 191023 Russia, and \newline 
Chebyshev Laboratory, St. Petersburg State University, 14th Line V.O. 29, St. Petersburg 199178 Russia}
\email{zograf@pdmi.ras.ru}
\thanks{PZ was partially supported by the Government of the Russian Federation megagrant 11.G34.31.0026 and by the RFBR grants 11-01-12092-OFI-M-2011 and 11-01-00677-a}

%\subjclass[2000]{Primary }
%    The 2010 edition of the Mathematics Subject Classification is
%    now available.  If you are citing a classification from the
%    new scheme, use the following input coding instead.
\subjclass[2010]{Primary 14H15, 14H70; Secondary 14K20, 30F30.}

\date{}

\begin{abstract}
We use the formalism of the Bergman tau functions to study the geometry of moduli spaces of holomorphic quadratic differentials on complex algebraic curves. We introduce two natural tau functions and interpret them as holomorphic sections of certain line bundles on the moduli space.
Analyzing the asymptotic behavior of these tau functions near the boundary of the moduli space we get two non-trivial relation in the rational Picard group of the moduli space of quadratic differential.
\end{abstract}

\maketitle

\section{Introduction and statement of results}

Tau functions play an important role in the theory of integrable systems providing canonical generators for commuting flows on the phase space. In some cases tau functions carry interesting geometric information,
like the isomonodromic tau function of the Riemann-Hilbert problem that is relevant in the theory
of Frobenius manifolds \cite{D}. The asymptotic analysis of the Bergman tau function introduced in \cite{IMRN} allowed to express the Hodge
class on the space of admissible covers of the projective line as an explicit linear combination of the boundary 
divisors \cite{Adv}. Then in \cite{KG1} this result was proven by pure algebraic geometry methods (namely, 
Grothendieck-Riemann-Roch theorem) and later in  \cite{KG} it was used to answer a question of Harris-Mumford \cite{HM} about 
the classes of the Hurwitz divisors in the moduli space $\Mb_g$ of stable nodal complex algebraic curves of even genus $g$. 
A version of the Bergman tau function for the moduli space of abelian differentials on algebraic
curves \cite{JDG} appeared to be useful in understanding the relations in the rational Picard group of this space
and was applied to the Kontsevich-Zorich theory of Teichm\"uller flow \cite{MRL}, see also \cite{EKZ}.

Here we continue to develop these ideas further, and apply the Bergman tau functions to study the geometry
of the moduli space $\Mgno$ of holomorphic quadratic differentials on complex algebraic curves of genus
$g$ (throughout the paper we assume that $g\geq 2$). 
The space $\Mgno$ is defined as follows. Consider the universal curve $\pi:\Cb_g\to\Mb_g$, 
and the relative dualizing sheaf $\omega_g$ on $\Cb_g$. We put $\Mgno=\pi_*\omega_g^2$ (i.e. the total space of the direct image of $\omega_g^2$ on $\Mb_g$). There is a natural projection $\Mgno\to\Mb_g$ that is a vector bundle of dimension $3g-3$
(in the orbifold sense). The points of $\Mgno$ are represented by the isomorphism classes of pairs $(C,q)$, where $C$ 
is a stable genus $g$ algebraic curve, and $q$ is a {\em regular} quadratic differential on $C$. The differential $q$ is holomorphic 
on a smooth curve $C$, and may have poles of order up to $2$ at the nodes of $C$ (in the case of poles the residues
of $q$ at the cusp for the two branches of $C$ coincide).
The moduli space $\Mgno$ has an open dense subset $\Mgn$, that consists of equivalence classes of pairs $(C,q)$,
where $C$ is a smooth curve, and $q$ has only simple zeros. We call $\Mgn$ the moduli space of {\em generic}
holomorphic quadratic differentials. 

Denote by $P\Mgno$ the projectivization of $\Mgno$, and put $P\Mgn=\Mgn/\C^*$, where $\C^*$ acts by multiplication on quadratic differentials. The complement $P\Mgno\setminus P\Mgn$ is a union of $[g/2]+2$ divisors that we denote
$D_{deg}, D_0,\ldots, D_{[g/2]}$, where $D_{deg}$ is the divisor of {\em degenerate} quadratic differentials 
(i.e. having at least one zero of multiplicity larger than 1), and $D_i\; (i=0,\ldots, [g/2])$ are the inverse images of the 
corresponding boundary divisors in $\Mb_g$ with respect to the projection $p:P\Mgno\to\Mb_g$. Let $L\to P\Mgno$ be the tautological line bundle associated with the projectivization $\Mgno\to P\Mgno$,
and put $\psi=c_1(L)\in {\rm Pic}(P\Mgno)\otimes\Q$. Denote by $\lambda$ the pullback of the Hodge class $\lambda_1=\det\pi_*\omega_g$ on $\Mb_g$. 
We also put 
$\delta_{i}=[D_{i}]$ for $i\neq 1$,  $\delta_1=\f{1}{2}[D_1]$ and $\delta_{deg}=[D_{deg}]$ in ${\rm Pic}(P\Mgno)\otimes\Q$.
The following fact is standard (cf., e.g, \cite{MRL}, Lemma 1):
\begin{lemma}\label{pic}
The rational Picard group ${\rm Pic}(P\Mgno)\otimes\Q$ is freely generated over $\Q$ by
$\psi, \lambda, \delta_0,\dots,\delta_{[g/2]}$.
\end{lemma}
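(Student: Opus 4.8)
The plan is to exploit the fact that $P\Mgno$ is a projective bundle over $\Mb_g$ and to reduce the statement to the well-known structure of the rational Picard group of $\Mb_g$. First I would record the geometric setup already given in the excerpt: $\Mgno=\pi_*\omega_g^2$ is the total space of the vector bundle $V:=\pi_*\omega_g^2$ of rank $3g-3$ over $\Mb_g$, and $P\Mgno$ is its projectivization $P(V)$, so that the projection $p:P\Mgno\to\Mb_g$ is a locally trivial $\mathbb{P}^{3g-4}$-bundle in the orbifold sense. The tautological bundle $L$ restricts to $\mathcal{O}(-1)$ on each fiber, so $\psi=c_1(L)$ restricts to a generator of the rational cohomology of the fiber $\mathbb{P}^{3g-4}$; this is the fact that will make $\psi$ an independent new generator.

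Next I would apply the projective bundle formula for Picard groups: for the projectivization $P(V)\to X$ of a vector bundle one has $\mathrm{Pic}(P(V))\cong p^*\mathrm{Pic}(X)\oplus\Z\,[L]$ with $p^*$ injective (Hartshorne II, Ex.~7.9; the statement holds for smooth Deligne--Mumford stacks after tensoring with $\Q$, equivalently by Leray--Hirsch applied to the classes $1,\psi,\dots,\psi^{3g-4}$). Applied to $V$ over $\Mb_g$ this yields the direct sum decomposition
\begin{equation}
{\rm Pic}(P\Mgno)\otimes\Q \;\cong\; p^*\bigl({\rm Pic}(\Mb_g)\otimes\Q\bigr)\,\oplus\,\Q\,\psi .
\end{equation}
The point I would emphasize here is that the single Grothendieck relation among the powers of $\psi$ imposed by the Chern classes of $V$ lives in cohomological degree $2(3g-3)=6g-6\geq 6$, and therefore does not interfere with the degree-$2$ part that governs the Picard group.

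Finally I would identify the two summands with the asserted generators. By the classical theorem of Harer and Arbarello--Cornalba, ${\rm Pic}(\Mb_g)\otimes\Q$ is freely generated over $\Q$ by $\lambda,\delta_0,\dots,\delta_{[g/2]}$; since the divisors $D_i$ on $P\Mgno$ are by definition the $p$-preimages of the boundary divisors of $\Mb_g$, their classes are exactly the pullbacks $p^*\delta_i$, and $\lambda$ is likewise the $p$-pullback of the Hodge class. Substituting into the displayed decomposition gives that $\psi,\lambda,\delta_0,\dots,\delta_{[g/2]}$ freely generate ${\rm Pic}(P\Mgno)\otimes\Q$. The step I expect to require the most care is the range of validity of the base case: free generation of ${\rm Pic}(\Mb_g)\otimes\Q$ by $\lambda$ together with the boundary classes holds for $g\geq 3$, whereas for $g=2$ Mumford's relation $10\lambda=\delta_0+2\delta_1$ removes one generator, so that low-genus case must be handled or excluded separately. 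I would also remark, to forestall confusion, that $\delta_{deg}=[D_{deg}]$ is deliberately absent from the list: the divisor $D_{deg}$ is horizontal rather than a pullback from $\Mb_g$, so its class is a nontrivial combination of $\psi,\lambda$ and the $\delta_i$, whose explicit determination is precisely one of the goals of the paper.
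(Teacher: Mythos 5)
Your proposal is correct and follows essentially the same route the paper relies on: the paper gives no proof of this lemma, citing it as standard (Lemma 1 of \cite{MRL}), and the standard argument is precisely yours --- the projective-bundle formula for the rational Picard group of $P(V)$ combined with the Harer/Arbarello--Cornalba description of ${\rm Pic}(\Mb_g)\otimes\Q$. Your caveat about low genus is also well taken: since the paper assumes only $g\geq 2$ while Mumford's relation $10\lambda=\delta_0+2\delta_1$ holds on $\Mb_2$, the free generation asserted in the lemma genuinely requires $g\geq 3$ (or a separate treatment of $g=2$), a point the paper glosses over.
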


To each pair $(C,q)$ one can canonically associate a twofold branched cover $f:\Ch\to C$ and an abelian differential $v$ on $\Ch$, where
$\Ch=\{(x,v(x))\,|\;x\in C,\;v(x)\in T_x^*C,\;v(x)^2=q(x)\}$. For generic $(C,q)\in\Mgn$
the curve $\Ch$ is smooth of genus $\gh=4g-3$ and $v$ is holomorphic.
The covering $f$ is invariant with respect to the canonical involution $(x,v(x))\mapsto (x,-v(x))$ on $\Ch$ that we denote by $\mu$.

Consider the natural map $\ph:P\Mgno\to \Mb_{\gh},\quad (C,q)\mapsto \Ch$. This map induces a vector bundle $\ph^*\Lambda^1_{\gh}\to \Mgno$
of dimension $\gh=4g-3$, where $\Lambda^1_{\gh}\to \Mb_{\gh}$ is the Hodge vector bundle. The involution $\mu$ on
$\Ch$ naturally induces an involution on the vector bundle $\ph^*\Lambda^1_{\gh}$. Hence we have a decomposition 
\be
\ph^*\Lambda^1_{\gh}=\Lambda_+\oplus \Lambda_-
\ee 
where $\Lambda_+$ (resp. $ \Lambda_-$) is the eigenbundle corresponding to the eigenvalue $+1$ (resp. $-1$).
Clearly, $\Lambda_+=p^*\pi_*\omega_g$ is the pullback of the Hodge bundle on $\Mb_g$. We call  $\Lambda_-$ the {\em Prym bundle} (since its fibers are the spaces of Prym differentials on $\Ch$), and we call $\pt=c_1(\Lambda_-)\in {\rm Pic}(P\Mgno)\otimes\Q$ the {\em Prym
class} (we will often denote the line bundles and their classes in the Picard group by the same symbols).

 The main result of the paper is
\begin{theorem}\la{theoint}
The Hodge class $\lambda$ and the Prym class $\pt$  can be expressed in terms of the tautological class $\psi$ and
the classes $\delta_{deg}$, $\delta_i ,\; i=0,\ldots, [g/2],$ as follows:
\begin{align}
&\lambda=\f{5(g-1)}{36}\,\psi +\f{1}{72}\, \delta_{deg}+\f{1}{12}\sum_{i=0}^{[g/2]}\delta_i\;,\la{H}\\
&\pt=\f{11(g-1)}{36}\,\psi +\f{13}{72}\, \delta_{deg}+\f{1}{12}\sum_{i=0}^{[g/2]}\delta_i\;.
\la{PTMbar}
\end{align}
\end{theorem}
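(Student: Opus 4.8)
The plan is to realize each of $\lambda$ and $\pt$ as part of the first Chern class of a line bundle carrying a distinguished, essentially nowhere-vanishing holomorphic section given by a Bergman tau function, and then to read off the relations by computing the divisor of that section. I would work with two tau functions. The first, $\tau$, is the Bergman tau function attached to the pair $(C,q)$ through the flat structure defined by $q$ (equivalently, through the Bergman projective connection of $C$ compared with the flat projective connection of $|q|$). The second, $\tauh$, is the Bergman tau function attached to the canonical cover $(\Ch,v)$, where $v$ is the abelian differential with $v^2=f^*q$; here $v$ lies in the stratum of holomorphic differentials on $\Ch$ having a double zero over each of the $4g-4$ simple zeros of $q$, and $\gh=4g-3$.

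First I would establish the transformation laws of $\tau$ and $\tauh$ under changes of the canonical homology basis. The standard computation in the Bergman formalism shows that each is a holomorphic section of an explicit tensor power of a determinant line bundle: $\tau$ of a bundle of the form $a\,\lambda+b\,\psi$, and $\tauh$ of a bundle expressed through the Hodge determinant $c_1(\Lambda^1_{\gh})$ pulled back by $\ph$. Since $\ph^*\Lambda^1_{\gh}=\Lambda_+\oplus\Lambda_-$ and $\Lambda_+=p^*\pi_*\omega_g$, one has $\ph^*c_1(\Lambda^1_{\gh})=\lambda+\pt$, so the line bundle of $\tauh$ is a combination of $\lambda+\pt$ and $\psi$ (the tautological class of $v$ being $\tfrac12\psi$, since $v\mapsto\sqrt t\,v$ when $q\mapsto tq$). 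Thus the two tau functions live in bundles that together span the two combinations $\lambda$ and $\lambda+\pt$ we wish to compute.

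Next I would argue that $\tau$ and $\tauh$ are holomorphic and nowhere zero on the open part $P\Mgn$ and its preimage: on the principal stratum the prime forms, theta constants and the determinants defining the tau functions are finite and non-degenerate, so the sections have empty divisor in the interior. Consequently $\mathrm{div}(\tau)$ and $\mathrm{div}(\tauh)$ are supported on $D_{deg},D_0,\dots,D_{[g/2]}$, and each equals $m_{deg}\,\delta_{deg}+\sum_i m_i\,\delta_i$ with rational multiplicities to be determined. Matching $c_1$ of the line bundle to this divisor then yields, for each tau function, one linear relation among $\lambda,\pt,\psi,\delta_{deg}$ and the $\delta_i$.

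The heart of the argument—and the step I expect to be the main obstacle—is the local asymptotic analysis of $\tau$ and $\tauh$ near each boundary divisor, which fixes the multiplicities $m_i$ and $m_{deg}$. Near the $D_i$ I would use the plumbing description of a curve acquiring a node together with the known degeneration of prime forms, theta functions and period matrices, taking care of how the involution $\mu$ and the cover $\Ch$ behave when $C$ degenerates; this is what distinguishes the $\Lambda_+$ and $\Lambda_-$ eigenbundles and produces the common coefficient $\tfrac1{12}$ in both formulas. Near $D_{deg}$, where two simple zeros of $q$ collide into a double zero, the cover $\Ch$ itself degenerates, and I would model this by the local collision of branch points and compute the resulting vanishing order of $v$, of the flat structure and of the determinant. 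This is delicate precisely because the contribution at $D_{deg}$ differs for the two tau functions, which is responsible for the distinct coefficients $\tfrac1{72}$ and $\tfrac{13}{72}$. Once all multiplicities are in hand, the relation coming from $\tau$ gives formula \eqref{H} for $\lambda$, and subtracting it from the relation coming from $\tauh$ (which computes $\lambda+\pt$) isolates $\pt$ and yields \eqref{PTMbar}; Lemma \ref{pic} guarantees that these expressions in the freely generated rational Picard group are the unique such relations.
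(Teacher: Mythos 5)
Your proposal is correct in outline and structurally parallel to the paper's proof (tau functions as nowhere-vanishing holomorphic sections on the interior, identification of the line bundles via homogeneity and modular transformation laws, boundary asymptotics fixing the divisor, then Lemma \ref{pic}), but your decomposition differs from the paper's in a way worth comparing. The paper's second tau function is not the stratum tau function $\tauh$ of the cover $(\Ch,v)$ but an intrinsically defined \emph{Prym} tau function $\tau_-$, built from the Prym bidifferential $B_-(x,y)=\Bh(x,y)-\mu_y^*\Bh(x,y)$; its transformation law involves $\det(C_-\Omega_-+D_-)^{48}$, so $\tau_-$ is a section of $\lambda_P^{48}\otimes L^{-\f{44(g-1)}{3}}$ and yields \eqref{PTMbar} directly, with no subtraction. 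Your pair $(\tau,\tauh)$ computes instead $\lambda$ and $\lambda+\pt$ (via $\ph^*\Lambda^1_{\gh}=\Lambda_+\oplus\Lambda_-$ and the weight $\tfrac12$ coming from $v=\sqrt{q}$), and it is equivalent to the paper's pair $(\tau_+,\tau_-)$ precisely because the paper proves $\tauh^2=\tau_+\tau_-$ (formula (\ref{reltau})); the resulting linear relations in ${\rm Pic}(P\Mgno)\otimes\Q$ are the same. What your route buys: the homogeneity and modular properties of $\tauh$ can be imported wholesale from \cite{MRL}, so much of the Prym-specific machinery (the variational formulas for $B_-$ and $\Omega_-$ in Sections 2--4) is avoidable, leaving only the half-integer-weight tau function of $(C,q)$ to be treated. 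What the paper's route buys: the class $\pt$ directly, plus the Prym tau function itself with its explicit formula, an object of independent interest. Two remarks on the step you flag as the main obstacle: near $D_{deg}$ the paper does not perform the plumbing/theta-degeneration computation you propose, but instead gets the exponents $\kappa_+=2/3$, $\kappa_-=26/3$ by a homogeneity trick --- comparing the $\C^*$-weights of $\tau_\pm$ on $\Mgn$ (Theorem \ref{hompro}) with those of the boundary tau functions $\tau'_\pm$ on $\Qc_g^1$ (formula (\ref{homcom1})), using that the transversal coordinate $t=z_1$ has weight $\tfrac12$; if you do degenerate $\tauh$ directly, note that when the anti-invariant cycle pinches the limiting differential has two simple \emph{zeros} (not poles) at the preimages of the node. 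Also, nowhere-vanishing on $P\Mgn$ is best justified not by finiteness and non-degeneracy of the ingredients of the explicit formulas (a theta-derivative could a priori vanish) but by the definition of the tau functions as covariant-constant sections of flat holomorphic connections on the trivial bundle: locally each is the exponential of an integral of a holomorphic one-form, hence non-zero. Neither point is an obstruction to your plan.
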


The proof is analytic and is inspired by the formalism of the 
Bergman tau functions from the theory of integrable systems. In particular, this theorem immediately implies that
\be
\pt - 13 \l= -\sum_{i=0}^{[g/2]}\delta_i -\f{3g-3}{2} \psi \;.
\la{pt10}
\ee
On the other hand, Mumford \cite{M} proved the following relation between the classes 
 $\l_n=c_1(\pi_*\omega_g^n)\in{\rm Pic}(\Mb_g)\otimes\Q$: 
\be
\l_n=(6n^2-6n+1)\lambda_1 - \f{n(n-1)}{2}\sum_{i=0}^{[g/2]}\delta_i\;.
\label{M}
\ee
It is easy to see that $\l= p^*\l_1$ and 
 $\pt= p^*\l_2-\f{3g-3}{2} \psi$. Therefore, Mumford's formula (\ref{M}) for $n=2$ 
is a direct consequence of  (\ref{pt10}). We notice, however, that Theorem \ref{theoint} provides more information about the classes $\l$ and $\l_P$ than  (\ref{M}) does.

\bigskip
The paper is organized as follows. In Section \ref{doubcov} we introduce a two-fold canonical cover corresponding to 
a pair consisting of a complex algebraic curve and a holomorphic quadratic differential on it. The main objective of
this section is to discuss the action of the covering involution on (co)homology of the cover and the associated matrix of $b$-periods. In Section \ref{varform} we introduce homological coordinates on the moduli space
of quadratic differentials and prove variational formulas of Ahlfors-Rauch type for the matrix of $b$-periods, normalized 
abelian differentials etc. with respect to these coordinates. In Section \ref{secdeftau} we define two tau functions corresponding to the eigenvalues $\pm 1$ of the covering map, discuss their basic properties and interpret them as holomorphic sections of line bundles on the moduli space of quadratic differentials. In Section \ref{secasymp} we study the asymptotic behavior of the
tau functions near the boundary of the moduli space and use it to express the Hodge and Prym classes via the classes of 
boundary divisors and the tautological class.

\section{Geometry of the double cover}
\la{doubcov}

Let $f:\Ch\to C$ be the double cover defined by the holomorphic quadratic differential $q$ with simple zeroes on a smooth curve $C$, and let $\mu:\Ch\to\Ch$ be the corresponding involution. Furthermore, let $\{x_1,\dots,x_{4g-4}\}\subset\Ch$ be the fixpoints of $\mu$ (or, in other words, the zeros of the abelian differential $v=\sqrt{q}$ on $\Ch$). By $\mu_*$ (resp. $\mu^*$) we denote the involution induced by $\mu$ in homology (resp. in cohomology) of $\Ch$.
As we mentioned above, the space $\Lambda^1_{\Ch}$ of abelian differentials on $\Ch$ splits into two eigenspaces $\Lambda_+$ and $\Lambda_-$ of complex dimension $g$ and $3g-3$ respectively that correspond to the eigenvalues $\pm 1$ of $\mu^*$. We have a similar decomposition in the real homology of $\Ch$: $H_1(\Ch,\R)=H_+\oplus H_-$, where $\dim\, H_+=2g$, $\dim\, H_-=6g-6$.

Following \cite{Fay73}, we pick $8g-6$ smooth 1-cycles on $\Ch\setminus\{x_1,\dots,x_{4g-4}\}$
\be
\{a_j,  a_j^*, \at_k,b_j,  b_j^*,\bt_k\}\,,\quad j=1,\dots,g,\;k=1,\dots,2g-3,
\label{mainbasis}
\ee
in such a way that
$$\mu_* a_j = a_j^*,\quad\mu_* b_j = b_j^*,
\quad\mu_* \at_k + \at_k= \mu_* \bt_k + \bt_k  = 0,$$
and the intersection matrix is
$$\left(\ba{cc} 0 & I_{4g-3}\\- I_{4g-3} & 0 \ea\right)$$
(here $I_n$ denotes the $n\times n$ identity matrix).

\begin{remark}
To avoid complicated notation, we will use the same symbols for the cycles (\ref{mainbasis}), their homology classes
in $H_1(\Ch\setminus\{x_1,\dots,x_{4g-4}\})$, and their pushforwards in $H_1(\Ch)$ and the relative homology group $H_1(\Ch,\{x_1,\dots,x_{4g-4}\})$. In particular, the images of (\ref{mainbasis}) give rise to a canonical basis in $H_1(\Ch)$.
\end{remark}

The basis of normalized abelian differentials on $\Ch$ associated with (\ref{mainbasis}) we denote by 
$\{u_j, u^*_j ,\ut_k \}$, so that the action of $\mu^*$ on  
$\Lambda^1_{\Ch}$ is given by the matrix
\be
M=  \left(\ba{ccc} 0 & I_g &0 \\
                     I_g  & 0 & 0 \\
                  0 & 0 & -I_{2g-3} \ea\right) 
\la{Smu}
\ee
The differentials $u_j^+=u_j+u_j^*,\; j=1,\dots,g$,
provide a basis in the space $\Lambda_+$, whereas a basis in $\Lambda_-$ is given by  $3g-3$ Prym differentials $u_l^-$, where
\be
u_l^-=\begin{cases}u_{l}-u_{l}^*,\quad l=1,\dots, 3\;,\\
\ut_{l-g}, \qquad\; l=g+1,\dots, 3g-3\;.\end{cases}
\la{Prym}
\ee

We also introduce the bases in the spaces $H_+$ and $H_-$.
The classes
\be
\a_j^+ = \f{1}{2}(a_j+ a_j^*)\;,\quad
\b_j^+ = b_j+ b_j^*\;,\quad j=1,\dots,g,
\la{abp}
\ee
form a symplectic basis in $H_+$, whereas the  classes
\begin{align}
&\a_l^- = \f{1}{2}(a_{l}- a_{l}^*) ,\quad\;
\b_l^- = b_{l}- b_{l}^*,\quad l=1,\dots,g\;,\nonumber\\
&\a^-_{l}=\at_{l-g},\qquad\qquad \b^-_{l}=\bt_{l-g}\;,\qquad\, l=g+1,\dots,3g-3
\la{abm}
\end{align}
form a symplectic basis in $H_-$.
The basis $\{a_j^+,\a_l^-,\b_j^+,\b_l^-\}$, $j=1,\dots,g$, $l=1,\dots,3g-3$, is related to the canonical basis (\ref{mainbasis}) by means of a (non-integer) symplectic matrix
\be
S=  \left(\ba{cc} T & 0 \\
                    0 & (T^t)^{-1}
 \ea\right) 
\la{mainmat}
\ee
with
\be
T=  \left(\ba{ccc} I_g & I_g& 0     \\
                     I_g & -I_g  & 0 \\
                    0& 0 & I_{2g-3}  \ea\right)\;.
\la{MaMb}
\ee
The differentials  $\{u_j^+,u_l^-\}$ are  normalized with respect to the classes $\{\a_j^+, \a_l^-\}$ in the sense that
$$\int_{\a_j^\pm} u_i^\pm=\delta_{ij}\;.$$
The corresponding matrices of $\beta$-periods $\Omega_+$ and $\Omega_-$ are given by
\be
(\Omega_+)_{ij}=\int_{\a_i^+} u_j^+\;,\quad i,j=1,\dots, g,\\
\la{sigp}
\ee
\be
(\Omega_-)_{ij}=\int_{\a_i^-} u_j^-\;, \quad i,j=1,\dots, 3g-3.
\la{sigm}
\ee
 The matrix $\widehat{\Omega}$ of $b$-periods of $\{u_j, \ut_k, u^*_j \}$ with respect to the homology basis (\ref{mainbasis}) on $\Ch$ is related to $\Omega^+$ and $\Omega^-$ by the formula
\be
\widehat{\Omega}=T^{-1}
 \left(\ba{cc} \Omega_+ & 0\\
                0  & \Omega_-  \ea\right) (T^t)^{-1}
\la{shs}
\ee

We proceed with bidifferentials and projective connections
on the double covers.
Let  $\Bh(x,y)$ denote the canonical (Bergman) bidifferential  on $\Ch\times\Ch$ associated with the homology basis (\ref{mainbasis}); $\Bh(x,y)$ is symmetric, has the second order pole on the diagonal $x=y$ with biresidue 1
 and satisfies
\be
\int_{a_j}\Bh(\,\cdot\,,y)=\int_{a_j^*}\Bh(\,\cdot\,,y)=\int_{\at_i}\Bh(\,\cdot\,,y)=0\;
\la{normbh}\ee
(here and below the dot stands for the integration variable).
Equivalently, (\ref{normbh}) can be written in terms of the cycles $\a_i^+$ and $\a_i^-$ defined by
(\ref{abp}) and (\ref{abm}):
\be
\int_{\a_j^+}\Bh(\,\cdot\,,y)=\int_{\a_l^-}\Bh(\,\cdot\,,y)=0\;.
\la{normbh1}\ee
for $j=1,\dots,g$ and $l=1,\dots,3g-3$.
Clearly, $\Bh(x,y)$ is invariant under the involution $(\mu_x,\mu_y): (x,y)\mapsto (\mu(x),\mu(y))$ on $\Ch\times\Ch$.

Now put
\be
B_+(x,y)=\Bh(x,y)+\mu_y^*\Bh(x,y)\,, \quad B_-(x,y)=\Bh(x,y)-\mu_y^*\Bh(x,y)\,,
\la{defBpm}
\ee 
(the subscript $y$ at $\mu^*$ means that we take the pullback with respect to the involution on the second factor in $\Ch\times\Ch$).
While $B_+(x,y)$ is just the pullback of the canonical bidifferential $B(x,y)$ on $C\times C$ (normalized relative to the classes $p_* a_j$), the bidifferential  $B_-$ looks like a new object that we call the {\it Prym bidifferential}.

\begin{lemma}
The  bidifferentials  $B_+$ and $B_-$ have the following properties:
\begin{enumerate}[(i)]
\item
\be
\mu_x^*\mu_y^*B_\pm(x,y)=B_\pm(x,y),
\la{muxy}
\ee
\item
\be
B_\pm(y,x)=B_\pm(x,y),
\la{sympm}
\ee
\item
\begin{align}
&\mu_x^*B_+(x,y)=\mu_y^*B_+(x,y)=B_+(x,y), \nonumber\\ &\mu_x^*B_-(x,y)=\mu_y^*B_-(x,y)=-B_-(x,y),
\la{Bpm}
\end{align}
\item
for any $s_+\in H_+,\;s_-\in H_-$
\be
\int_{s_-} B_+(\,\cdot\,,y)=0, \quad\quad \int_{s_+} B_-(\,\cdot\,,y)=0,
\la{BpH}
\ee
\item
\be
\int_{\beta^\pm_i} B_\pm(\,\cdot\,,y)= 4\pi \sqrt{-1} u_i^\pm(y)
\la{perBpm}
\ee 
\end{enumerate}
\end{lemma}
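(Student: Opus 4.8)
The plan is to deduce all five items from the two properties of the Bergman bidifferential already recorded above: its symmetry $\Bh(x,y)=\Bh(y,x)$ and its invariance $\mu_x^*\mu_y^*\Bh(x,y)=\Bh(x,y)$ under the diagonal involution. The one preliminary identity I would extract first is
\ben
\mu_x^*\Bh(x,y)=\mu_y^*\Bh(x,y)\,,
\een
obtained by applying $\mu_y^*$ to the invariance relation and using that $\mu$ is an involution, so $(\mu_y^*)^2=\mathrm{id}$. Writing the defining relations (\ref{defBpm}) as $B_\pm=\Bh\pm\mu_y^*\Bh$, everything else is then either a formal manipulation of the operators $\mu_x^*,\mu_y^*$ or a single period computation.

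For (iii) I would simply compute $\mu_y^*B_\pm=\mu_y^*\Bh\pm\Bh=\pm B_\pm$, and $\mu_x^*B_\pm=\mu_x^*\Bh\pm\mu_x^*\mu_y^*\Bh=\mu_y^*\Bh\pm\Bh=\pm B_\pm$, where the last step uses both the preliminary identity and invariance. Item (i) is then the composition of the two equalities in (iii) (I would still state it separately, as in the lemma). For (ii) I would swap the two arguments, which interchanges the roles of $\mu_x^*$ and $\mu_y^*$; combining $\Bh(y,x)=\Bh(x,y)$ with the preliminary identity shows that the swap fixes both $B_+$ and $B_-$.

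Item (iv) is where the homological eigenspace structure enters. One first checks, directly from the definitions (\ref{abp})--(\ref{abm}) and the relations $\mu_*a_j=a_j^*$, $\mu_*b_j=b_j^*$, $\mu_*\at_k=-\at_k$, $\mu_*\bt_k=-\bt_k$, that $H_\pm$ is precisely the $(\pm1)$-eigenspace of $\mu_*$. Then for $s_-\in H_-$ I would use the adjunction $\int_{s_-}\mu_x^*B_+=\int_{\mu_*s_-}B_+$ together with $\mu_x^*B_+=B_+$ from (iii) and $\mu_*s_-=-s_-$ to get $\int_{s_-}B_+=-\int_{s_-}B_+$, hence $\int_{s_-}B_+=0$; the vanishing of $\int_{s_+}B_-$ is identical, now using $\mu_x^*B_-=-B_-$ and $\mu_*s_+=s_+$.

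Finally, (v) is the only genuinely computational step. The input is the Riemann bilinear relation $\int_{b_j}\Bh(\,\cdot\,,y)=2\pi\sqrt{-1}\,u_j(y)$ for the normalization (\ref{normbh}), and likewise for $b_j^*$ and $\bt_k$. Since $\mu_y^*$ acts only on the second variable it commutes with the $x$-period, so $\int_{b_j}\mu_y^*\Bh=2\pi\sqrt{-1}\,\mu^*u_j$, and the action of $\mu^*$ on the normalized differentials is read off from the matrix $M$ of (\ref{Smu}): $\mu^*u_j=u_j^*$, $\mu^*u_j^*=u_j$, $\mu^*\ut_k=-\ut_k$. Substituting these into $B_\pm=\Bh\pm\mu_y^*\Bh$ and integrating over the defining combinations $\b_i^\pm$ of (\ref{abp})--(\ref{abm}), the $2\pi\sqrt{-1}$-terms add up to $4\pi\sqrt{-1}$ times $u_i\pm u_i^*$ for $i\le g$, or to $4\pi\sqrt{-1}\,\ut_{i-g}$ for $i>g$, which is exactly $4\pi\sqrt{-1}\,u_i^\pm$ by (\ref{Prym}) and the definition $u_i^+=u_i+u_i^*$. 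I do not expect a serious obstacle: the lemma is entirely driven by the symmetry and invariance of $\Bh$, and the only place demanding care is the sign bookkeeping in (v)---tracking how $\mu_y^*$ passes through the $x$-period and how $M$ acts on each block of differentials.
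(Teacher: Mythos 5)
Your proposal is correct, and it is exactly the argument the paper has in mind: the paper dispatches this lemma with ``the proof is elementary and immediately follows from the definitions,'' and your write-up is precisely that definitional verification, built on the symmetry and $(\mu_x,\mu_y)$-invariance of $\Bh$, the eigenspace structure of $H_\pm$ under $\mu_*$, and the standard $b$-period identity $\int_{b_j}\Bh(\,\cdot\,,y)=2\pi\sqrt{-1}\,u_j(y)$ together with the action (\ref{Smu}) of $\mu^*$ on the normalized differentials. Your sign bookkeeping in (iii)--(v), including the adjunction $\int_{s}\mu_x^*\omega=\int_{\mu_*s}\omega$ and the block-wise action of $M$, checks out, so nothing is missing.
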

\begin{proof} The proof is elementary and immediately follows from the definitions.
\end{proof}
Near the diagonal $x=y$ in $\Ch\times\Ch$ we have
\be
\Bh(x,y)= \f{d\zeta(x)d\zeta(y)}{(\zeta(x)-\zeta(y))^2}+\f{1}{6}S_\Bh(\zeta(x))+\dots
\la{Bhatdiag}
\ee
where $\zeta(x)$ is a local parameter at $x\in\Ch$, and $S_\Bh(x)$ is a projective connection on $\Ch$. For $B_\pm(x,y)$ near the diagonal we have
\be
B_\pm(x,y)= \f{d\zeta(x)d\zeta(y)}{(\zeta(x)-\zeta(y))^2}+\f{1}{6}S_{B_{\pm}}(\zeta(x))
+\dots
\la{Bpmdiag}
\ee
with two  projective connections $S_{B_+}$ and $S_{B_-}$ that are related by
\be
S_{B_\pm}(x)=S_{\Bh}(x)\pm 6 \mu_y^*\Bh(x,y)\left|_{y=x}\right..
\la{defSbpm}
\ee
We will call $S_{B_-}$  the {\it Prym projective connection}.

Now we describe the dependence of bidifferentials and projective connections on the choice of  homology basis.
Let $\sigma$  be a symplectic (i.e., preserving the intersection form) transformation of $H_1(\Ch,\Z)$.  In the canonical basis (\ref{mainbasis}), $\sigma$ is represented by an $Sp(8g-6,\Z)$-matrix
$\left(\ba{cc} A & B\\C  & D  \ea\right)$ with square blocks of size $4g-3$. The canonical  bidifferential $\Bh^\sigma(x,y)$ on $\Ch\times\Ch$ associated with the new basis is
\be
\Bh^\sigma (x,y) = \Bh(x,y) -2\pi \sqrt{-1} \bu(x)^t (C\wO+ D)^{-1} C\, \bu(y),
\la{transB}
\ee
where ${\bu}=\{u_j,\tilde{u_k},u_j^*\}^t$ is the vector of holomorphic abelian differentials on $\Ch$ normalized with respect to $\{a_j,\tilde{a_k},a_j^*\}\,,\;
j=1,\dots,g,\;k=1,\dots, 2g-3.$

If $\sigma$ commutes with the involution $\mu_*$, then $\sigma={\rm diag}(\sigma_+,\sigma_-)$, where $\sigma_+$ (resp. $\sigma_-$) is a symplectic transformation of $H_+$ (resp. $H_-$) with half-integer coefficients (note that the intersection form on $H_1(\Ch,\R)$ is invariant with respect to $\mu_*$). In the bases $\{\a_j^+,\b_j^+\},\; j=1,\dots,g,$ and $\{\a_l^-,\b_l^-\},\;
l=1,\dots, 3g-3,$ these transformations can be written as
\be
\sigma_\pm:=  \left(\ba{cc} A_\pm & B_\pm\\
                C_\pm  & D_\pm  \ea\right).
\la{Qpm}
\ee
The following is immediate:
\begin{lemma} 
Let ${\bu}_+$ (resp. ${\bu}_-$) be the vector of holomorphic abelian differentials $\{u_j^+\}$ in $\Lambda_+$ 
(resp.  $\{u_l^-\}$ in $\Lambda_-$) normalized relative to
the classes $\{\a_j^+\}$ in $H_+$ (resp. $\{\a_j^-\}$ in $H_-$), 
$j=1,\dots,g,\;l=1,\dots, 3g-3$.
Then for $\sigma$ commuting with $\mu_*$ we have
\begin{enumerate}[(i)]
\item
\be
{\bu^\sigma}_\pm = (C_\pm\Omega_\pm+ D_\pm)^{-1} {\bu}_\pm,
\la{transupm}
\ee
\item
\be
\Omega_\pm^\sigma = (C_\pm\Omega_\pm+ D_\pm)^{-1} (A_\pm\Omega_\pm+ B_\pm),
\la{transspm}
\ee
\item
\begin{align}
&B_+^\sigma (x,y) = B_+(x,y) -4\pi \sqrt{-1} \bu_+^t(x) (C_+\Omega_++ D_+)^{-1} C_+ \bu_+(y),\la{transBp}\\
&B_-^\sigma (x,y) = B_-(x,y) -4\pi \sqrt{-1} \bu_-^t(x) (C_-\Omega_-+ D_-)^{-1} C_- \bu_-(y),
\la{transBm}
\end{align}
\item
\be
S_{B_\pm}^\sigma= S_{B_\pm} - 24\pi \sqrt{-1} \bu_\pm^t (C_\pm\Omega_\pm+ D_\pm)^{-1} C_\pm \bu_\pm\;.
\la{transSb}
\ee
\end{enumerate}

\end{lemma}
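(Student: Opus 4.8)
The plan is to reduce all four identities to the classical transformation theory of the canonical bidifferential and the period matrix under a symplectic change of marking, applied \emph{separately} on the two eigenspaces $H_+$ and $H_-$, the only genuinely new feature being the numerical factor forced by (\ref{perBpm}).

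\textbf{Decoupling into the two eigenspaces.} First I would record the structural consequence of $\sigma$ commuting with $\mu_*$. Because $\mu_*$ is an involution with eigenvalues $\pm1$, a commuting $\sigma$ preserves the splitting $H_1(\Ch,\R)=H_+\oplus H_-$; moreover $H_+$ and $H_-$ are orthogonal for the intersection form, since for $s_+\in H_+,\ s_-\in H_-$ the $\mu_*$-invariance of the form gives $\langle s_+,s_-\rangle=\langle\mu_* s_+,\mu_* s_-\rangle=\langle s_+,-s_-\rangle=-\langle s_+,s_-\rangle$. Hence the form restricts to a symplectic form on each summand and $\sigma=\mathrm{diag}(\sigma_+,\sigma_-)$ with $\sigma_\pm$ symplectic, given in the bases $\{\a_j^+,\b_j^+\}$, $\{\a_l^-,\b_l^-\}$ by the blocks (\ref{Qpm}). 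The half-integrality of $\sigma_\pm$ is harmless: every computation below uses only the $Sp$-relations $A_\pm^tC_\pm=C_\pm^tA_\pm$, $B_\pm^tD_\pm=D_\pm^tB_\pm$, $A_\pm^tD_\pm-C_\pm^tB_\pm=I$, which hold over $\R$. The problem thus splits into two independent instances of a symplectic change of marking.

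\textbf{Parts (i) and (ii).} On each eigenspace the data $(\{u_j^\pm\},\{\a^\pm,\b^\pm\},\Omega_\pm,B_\pm)$ is a self-contained polarized period structure: $\bu_\pm$ is normalized by $\int_{\a^\pm}\bu_\pm=I$, its matrix of $\b^\pm$-periods is $\Omega_\pm$, and $B_\pm$ has biresidue $1$ on the diagonal, vanishing $\a^\pm$-periods (immediate from (\ref{normbh1})) and $\b^\pm$-periods $4\pi\sqrt{-1}\,\bu_\pm$ by (\ref{perBpm}). For (i) and (ii) I would transcribe the classical argument verbatim: expressing the $\sigma$-transformed cycles in terms of $\{\a^\pm,\b^\pm\}$ through the blocks of $\sigma_\pm$ and imposing the normalization $\int_{\a^{\pm,\sigma}}\bu_\pm^\sigma=I$ forces $\bu_\pm^\sigma=(C_\pm\Omega_\pm+D_\pm)^{-1}\bu_\pm$, which is (\ref{transupm}); integrating this over the new $\b$-cycles then yields $\Omega_\pm^\sigma=(C_\pm\Omega_\pm+D_\pm)^{-1}(A_\pm\Omega_\pm+B_\pm)$, i.e.\ (\ref{transspm}), the symmetry of $\Omega_\pm^\sigma$ being the usual consequence of the $Sp$-relations.

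\textbf{Parts (iii) and (iv).} For (iii) I would note that $B_\pm^\sigma-B_\pm$ is symmetric and has no pole on the diagonal (both terms carry the same biresidue-$1$ singularity), hence is holomorphic; since $\sigma$ does not mix the eigenspaces, $B_\pm^\sigma$ still has $\mu^*$-eigenvalue $\pm1$ in each variable by (\ref{Bpm}), so the difference is valued in $\mathrm{Sym}^2\Lambda_\pm$ and can be written $\bu_\pm^t(x)\,Q\,\bu_\pm(y)$ with $Q$ symmetric. Taking $\a^{\pm,\sigma}$-periods in $x$, using $\int_{\a^{\pm,\sigma}}B_\pm^\sigma=0$ together with (\ref{normbh1}) and (\ref{perBpm}), pins down $Q=-4\pi\sqrt{-1}\,(C_\pm\Omega_\pm+D_\pm)^{-1}C_\pm$; this is (\ref{transBp})--(\ref{transBm}), the factor $4\pi\sqrt{-1}$ (versus the $2\pi\sqrt{-1}$ of (\ref{transB})) being exactly the doubling recorded in (\ref{perBpm}). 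Equivalently, (iii) drops out of the master formula (\ref{transB}) upon applying $\tfrac12(1\pm\mu_y^*)$, using $\mu_y^*\bu=M\bu$ from (\ref{Smu}) and re-expressing $\Bh$, $\wO$ and the canonical-basis blocks of $\sigma$ in the eigenbasis through $S$ of (\ref{mainmat})--(\ref{MaMb}) and the relation (\ref{shs}). Finally (iv) follows by letting $y\to x$ in (iii): the $(\zeta(x)-\zeta(y))^{-2}$ terms cancel, the $\tfrac16$-coefficients in (\ref{Bpmdiag}) match $S_{B_\pm}^\sigma$ and $S_{B_\pm}$, and the regular correction contributes the factor $6\cdot 4\pi\sqrt{-1}=24\pi\sqrt{-1}$, giving (\ref{transSb}).

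The only real point of care, and the main obstacle, is bookkeeping this factor of two: one must confirm that $B_\pm$ genuinely plays the role of the canonical bidifferential normalized against $\a^\pm$ with $\b^\pm$-periods $4\pi\sqrt{-1}\,\bu_\pm$, and that the half-integral $\sigma_\pm$ nonetheless obeys the full $Sp$-relations over $\R$ needed for the symmetry of $\Omega_\pm^\sigma$ and for the period cancellations above. Once these are checked, each step is the verbatim classical computation carried out on the eigenspace.
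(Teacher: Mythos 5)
Your proof is correct and coincides with what the paper implicitly relies on: the paper states this lemma without proof (prefacing it only with ``The following is immediate''), the intended argument being exactly yours — the classical symplectic change-of-marking computation carried out separately on each $\mu_*$-eigenspace, with the factor $4\pi\sqrt{-1}$ in (iii) (and hence $24\pi\sqrt{-1}$ in (iv)) forced by the doubled $\beta$-periods in (\ref{perBpm}). Your checks of the eigenspace decoupling of $\sigma$ and of the normalization data for $B_\pm$ supply precisely the standard details the paper omits.
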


\section{Variational formulas}
\la{varform}

Let $(C,q)$ represent a point in $\Mgn$ (i.e. $C$ is smooth of genus $g$, and $q$ is holomorphic with $4g-4$ simple zeros). Let $(\Ch, v)$ be the corresponding canonical double cover and  abelian differential. Choose a canonical basis in $H_1(\Ch,\Z)$ of the form (\ref{mainbasis}), and consider the basis $\{\a_i^-,\b_{i}^-\}$ in $H_-$ given by (\ref{abm}).
Put for brevity
\be
s_{2i-1}=\a_i^-,\quad s_{2i}=\b_i^-,\qquad i=1,\dots,3g-3,
\la{si}
\ee
and consider $6g-6$ complex parameters
\be
z_i=\int_{s_i} v,\quad i=1,\dots,6g-6
\la{zi}
\ee
defined in a neighborhood of $(C,q)$ in $\Mgn$.

\begin{lemma}\la{lemmacoord}
The parameters $z_i,\; i=1,\dots,6g-6,$ provide a system of local coordinates on $\Mgn$.
\end{lemma}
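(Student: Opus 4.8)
The plan is to show that the period map
$\Phi\colon (C,q)\mapsto (z_1,\dots,z_{6g-6})$
is a local biholomorphism from $\Mgn$ onto an open set in $\C^{6g-6}$. Since $\dim_\C\Mgn=6g-6$ (the base $\M_g$ contributes $3g-3$, and by Riemann--Roch the fibre $H^0(C,K_C^2)$ of regular quadratic differentials contributes another $3g-3$ for $g\geq2$), the number of coordinates matches the dimension. It therefore suffices to prove that $\Phi$ is holomorphic and that its differential is everywhere non-degenerate; the holomorphic inverse function theorem then gives the claim.

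First I would verify holomorphicity. The cycles $s_i\in H_-$ may be transported to nearby fibres by the Gauss--Manin connection, i.e. they form a locally constant frame of $H_-$ over a neighbourhood of $(C,q)$. Since $v=\sqrt q$ depends holomorphically on $(C,q)\in\Mgn$ and the contours $s_i$ are locally constant, each $z_i=\int_{s_i}v$ is a holomorphic function, so $\Phi$ is holomorphic.

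The heart of the argument is the computation of $d\Phi$ and the verification that it is an isomorphism. Here I would identify the tangent space $T_{(C,q)}\Mgn$ with the $\mu$-anti-invariant relative cohomology $H^1_-(\Ch,\{x_1,\dots,x_{4g-4}\};\C)$, using that deformations of the pair $(\Ch,v)$ inside the stratum of abelian differentials with prescribed (double) zeros are classified by relative cohomology, restricted to the deformations commuting with $\mu$ and preserving $\mu^*v=-v$. The crucial simplification is that the covering involution fixes each zero $x_k$ \emph{pointwise}, so $\mu$ acts trivially on $H^0(\Ch)$ and on $H^0(\{x_1,\dots,x_{4g-4}\})$; taking anti-invariant parts in the (exact) long exact sequence of the pair then collapses the connecting term, yielding a natural isomorphism $H^1_-(\Ch,\{x_1,\dots,x_{4g-4}\};\C)\cong H^1_-(\Ch;\C)$, a space of dimension $6g-6=\dim H_-$. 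Under this identification $d\Phi$ sends a deformation, represented by the variation $[\dot v]\in H^1_-(\Ch;\C)$, to the vector of its periods $\bigl(\int_{s_i}[\dot v]\bigr)_i$ over the symplectic basis $\{s_i\}$ of $H_-$; this is precisely the period pairing written in dual bases, hence an isomorphism.

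I expect the main obstacle to be the identification of the tangent space with anti-invariant cohomology, and in particular the justification that the relative periods $\int_{x_j}^{x_k}v$ are not independent new coordinates but are determined by the absolute anti-invariant periods, so that no separate coordinates for the positions of the zeros are required. This follows from the observation that for a path $\gamma$ joining two fixed points the cycle $\gamma-\mu_*\gamma$ lies in $H_-$ and satisfies $\int_{\gamma-\mu_*\gamma}v=2\int_\gamma v$, since $\mu^*v=-v$ and $\mu$ fixes the endpoints; thus every relative period is half an absolute anti-invariant period. Once this is in place, the $6g-6$ absolute periods $z_i$ exhaust a full set of local coordinates and $d\Phi$ is non-degenerate, completing the proof.
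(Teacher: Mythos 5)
Your proposal is correct and takes essentially the same route as the paper: both arguments embed $\Mgn$ into the stratum $\strat$ via the canonical cover, invoke the known homological (period) coordinates on that stratum, and use the key facts that $\mu$ fixes the zeros pointwise and $\mu^*v=-v$ to conclude that the relative periods and the $H_+$-periods are determined by (indeed, collapse onto) the $6g-6$ anti-invariant periods. Your long-exact-sequence identification $H^1_-(\Ch,\{x_1,\dots,x_{4g-4}\};\C)\cong H^1_-(\Ch;\C)$ and the non-degeneracy of the period pairing are just the cohomological restatement of the paper's cycle-level observation that $l_j\cup\mu(l_j)$ is a cycle with $\int_{l_j}v=-\int_{\mu(l_j)}v$, with your version being slightly more explicit about why the differential of the period map is an isomorphism.
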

\begin{proof}
Since the differential $v$ has a double zero at each of $4g-4$ branch points of the covering $\Ch\to C$,
the pair $(\Ch,v)$ belongs to the stratum $\strat$ of the moduli space of abelian differentials on
algebraic curves of genus $\gh$ with $\gh-1$ zeroes of multiplicity $2$ (cf. \cite{JDG} for details). The dimension of the space $\strat$ is 
$3\gh-2= 12g-11$ and the space $\Mgn$ (of dimension  $6g-6$) forms a subspace of codimension $6g-5$ in
 $\strat$. This subspace can easily be described in terms of homological coordinates on  $\strat$.

Let $(\widetilde{C},\tilde{v})$ represent a point in $\strat$.
The set of homological coordinates on $\strat$ consists of integrals of $\tilde{v}$ along any set of cycles representing a basis in the relative homology group $H_1(\Ct, \{x_1,\dots,x_{\gh-1}\})$; such a basis 
consists of a canonical basis (\ref{mainbasis}) in $H_1(\Ct)$, and of the relative homology classes of $\gh-2=4g-5$ paths $l_2,\dots,l_{4g-4}$ connecting 
$x_1$ with $x_j,\; j=2,\dots,4g-4$,  and not intersecting the cycles (\ref{mainbasis}).
If a pair  $(\Ct,\tilde{v})$ belongs to $\Mgn$, $\Ct$ is invariant under a holomorphic involution $\mu$ and 
$\mu^*\tilde{v}=-\tilde{v}$ (in particular,
each zero $x_i$ of $v$ is invariant under $\mu$). 
Therefore, when $\Ct=\Ch$ and $\tilde{v}=v$, each union
$l_j\cup\mu(l_j)$ is a cycle on $\Ch$ and can be decomposed into a linear combination of the elements of the basis (\ref{mainbasis}).
Clearly, $\int_{l_i}v =-\int_{\mu(l_i)}v$, so that each coordinate $\int_{l_i}v$ on $\Mgn$ becomes a linear combination of $8g-6$ coordinates given by the periods of $v$ along $a$- and $b$-cycles on $\Ch$ with half-integer coefficients.
Moreover, we have  $\int_{a_j^*} v= -\int_{a_j} v$ and  $\int_{b_j^*} v= -\int_{b_j} v$ for each $j=1,\dots,g$, which
gives $2g$ more vanishing linear combinations of homological coordinates on $\Mgn$. 
The remaining $6g-6$ homological coordinates associated with a basis in $H_-$ can obviously serve as local coordinates on $\Mgn$. 
\end{proof}

This lemma allows to derive variational formulas on $\Mgn$ by restricting the already known variational formulas on $\strat$.

Put $z(x)=\int_{x_1}^x v$; this is a local coordinate on $\Ch\setminus \{x_1,\dots,x_{\gh-1}\}$.

\begin{lemma} \la{varBhteo}
Assuming that both $z(x)$ and $z(y)$ are kept fixed, we have
\be
\f{\p B_\pm (x,y)}{\p z_i} = \f{1}{2\pi \sqrt{-1}}\int_{ s_i^*} \f{B_\pm(x,\cdot)\, B_\pm(\cdot\,,y)}{v(\cdot)} 
\la{varBh}
\ee
where the coordinates $z_i$ are given by (\ref{zi}), $s_i^*$ denotes the $i$-th element of the dual basis $\{\b_i^-,-\a_{i}^-\}$,  $i=1,\dots,3g-3$, and the dot stands for the integration variable as before.
\end{lemma}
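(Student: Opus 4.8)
My plan is to obtain (\ref{varBh}) by restricting to $\Mgn$ the Rauch-type variational formula for the canonical bidifferential $\Bh$ that is already available on the ambient stratum $\strat$ (see \cite{JDG}), and then projecting onto the $\pm$-eigenparts by means of the covering involution $\mu$. This should work because, by Lemma \ref{lemmacoord}, the functions $z_i$ are precisely the anti-invariant periods of $v$, so the directions $\partial/\partial z_i$ are exactly the Prym directions inside $\strat$.

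Concretely, I would proceed in three steps. First, recall the stratum formula: for a homological coordinate $\int_\gamma v$ on $\strat$, keeping the flat coordinate $z(\cdot)=\int v$ fixed at both arguments, $\partial_{\int_\gamma v}\Bh(x,y)=\f{1}{2\pi\sqrt{-1}}\int_{\gamma^*}\f{\Bh(x,\cdot)\Bh(\cdot,y)}{v}$, with $\gamma^*$ the dual cycle. Second, express $\partial/\partial z_i$ through the stratum vector fields by the chain rule; for $s_i=\a_l^-=\f12(a_l-a_l^*)$ the absolute part is $\partial_{A_l}-\partial_{A_l^*}$ (with $A_l=\int_{a_l}v$), whose dual cycle is $b_l-b_l^*=\b_l^-=s_i^*\in H_-$, while additional relative-period directions are addressed below. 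Third, pass to $B_\pm$: in the flat coordinate the involution is simply $z\mapsto -z$ and is therefore moduli-independent, so $\mu^*$ commutes with $\partial_{z_i}$ at fixed $z(x),z(y)$. Differentiating $B_\pm=\Bh\pm\mu_y^*\Bh$ and inserting the restricted $\Bh$-formula symmetrizes the second argument, giving $\partial_{z_i}B_\pm=\f{1}{2\pi\sqrt{-1}}\int_{s_i^*}\f{\Bh(x,\cdot)B_\pm(\cdot,y)}{v}$ (up to the corrections below); symmetrizing the first argument by the same device—using that pulling $\Bh(x,y)$ back by $\mu$ in either argument gives the same bidifferential, together with the parities $\mu^*B_\pm=\pm B_\pm$ and $\mu^*v=-v$—is meant to replace the remaining $\Bh$ by $B_\pm$.

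The hard part, and where I expect most of the work to lie, is the precise bookkeeping of correction terms in steps two and three. The chain rule is not exhausted by the absolute periods: on $\Mgn$ the relative periods $\int_{l_j}v$ are determined functions of the $z_i$, so $\partial/\partial z_i$ also acquires relative-period directions, whose variational contributions are residues of $\Bh(x,\cdot)\Bh(\cdot,y)/v$ at the double zeros $x_j$ of $v$. In parallel, resolving $\Bh=\f12(B_++B_-)$ inside the integral produces cross terms $\int_{s_i^*}\f{B_\mp(x,\cdot)B_\pm(\cdot,y)}{v}$. I would control both by parity: the cross-term integrands are $\mu$-invariant whereas $s_i^*\in H_-$, and near each fixed point $x_j$ the form $B_\pm(x,\cdot)B_\pm(\cdot,y)/v$ is odd under the local branch involution $\zeta\mapsto-\zeta$ (since $v\sim\zeta^2 d\zeta$ there), hence has no residue.

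The decisive and most delicate point is then to verify that, after these cancellations, the absolute contribution together with the surviving relative residues assembles into exactly $\f{1}{2\pi\sqrt{-1}}\int_{s_i^*}\f{B_\pm(x,\cdot)B_\pm(\cdot,y)}{v}$, with the stated constant rather than some rational multiple of it; tracking this coefficient honestly through the double-zero residues is what I expect to be the real obstacle. Should this direct route prove unwieldy, a cleaner equivalent is to re-run the contour-integration derivation of the Rauch formula directly for $B_\pm$, exploiting that $B_\pm$ is the canonical bidifferential normalized on the Lagrangian subspace spanned by the $\a^\pm$-cycles in $H_\pm$ (properties (\ref{BpH})--(\ref{perBpm})), with $v$ furnishing the flat structure.
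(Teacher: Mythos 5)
Your proposal is correct and follows essentially the same route as the paper's proof: the paper likewise restricts the stratum formulas (\ref{bha})--(\ref{bhli}) from \cite{JDG} to $\Mgn$ via the chain rule (justified by Lemma \ref{lemmacoord}), kills the relative-period contributions $\int_{r_j}$ by exactly your parity argument ($\mu$-odd integrand over a $\mu$-invariant loop $r_j$), and removes the $\Bh$-versus-$B_\pm$ mismatch by symmetrizing in $x$ and $y$, with the $\mu$-even cross terms dying against the anti-invariant cycles $s_i^*\in H_-$. The coefficient bookkeeping you single out as the main obstacle is precisely what the paper compresses into its one-line ``symmetrizing the r.h.s.'' step, so there is no divergence of method.
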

\begin{proof}
As in Lemma \ref{lemmacoord}, we pick a basis in
$H_1(\Ch;\{x_1,\dots,x_{\gh-1}\})$ given by the classes
$\{a_i,b_i\},\;i=1,\dots,\gh,$ and the paths $l_j$
connecting $x_1$ with $x_j,\;j=2,\dots,\gh-1$ that do not intersect $a_i,\,b_i$.
Consider the dual basis in $H_1(\Ch\setminus \{x_1,\dots,x_{\gh-1}\})$ given by 
 $\{b_i,-a_i,r_j\}$, where $r_j$ is a 
small positively-oriented loop around $x_j$ not intersecting $a_i,\,b_i$. We extend these two dual bases to all pairs $(\Ct,\tilde{v})\in\strat$. 

Let us cut $\Ct$ along the cycles $a_i,\,b_i$ into a fundamental polygon $\widetilde{F}$. The local coordinate $z(x)$ is then a holomorphic function on $\widetilde{F}$ whose differential vanishes precisely at $x_1,\dots,x_{\gh-1}$. Denote by $B_j$ the disk bounded by $r_j$ and put $\widetilde{F}_j=\widetilde{F}\setminus B_j$.

Denote by $\Bt(x,y)$ the Bergman bidifferential on $\Ct\times\Ct$. Then
for $x,y\in \widetilde{F}\setminus\{x_1,\dots,x_{\gh-1}\}$ with $z(x),\,z(y)$ fixed, the following variational formulas for $\Bt(x,y)$ were proven in \cite{JDG} over the moduli space $\strat$:
\begin{align}
&\f{\p\Bt(x,y) }{\p (\int_{a_i} \vt)}
= \f{-1}{2\pi\sqrt{-1}}\int_{b_i}   \f{ \Bt(x,\cdot)\Bt(\cdot\,,y)}{\vt(\cdot)}\;,
\la{bha}\\
&\f{\p\Bt(x,y) }{\p (\int_{b_i} \vt)}
= \f{1}{2\pi\sqrt{-1}}\int_{a_i}  \f{ \Bt(x,\cdot)\Bt(\cdot\,,y)}{\vt(\cdot)}\;,
\la{bhb}\\
&\f{\p\Bt(x,y)}{\p (\int_{l_j} \vt)}
= \f{1}{2\pi\sqrt{-1}}\int_{r_j} \f{ \Bt(x,\cdot)\Bt(\cdot\,,y)}{\vt(\cdot)}\;,\quad x,y\in\widetilde{F}_j\;.
\la{bhli}
\end{align}

Lemma \ref{lemmacoord} claims, in particular, that the  subspace $\Mgn$ in $\strat$ is given by linear equations in homological coordinates.
Restricting the formulas (\ref{bha}), (\ref{bhb}) and (\ref{bhli}) to the space $\Mgn$
we  get the variational formulas (\ref{varBh}) as follows. 

The derivatives of  $\Bh(x,y)$
with respect to $z_{2j-1}=\int_{\a_j^-}v,\;j=1,\dots,g$ can be immediately computed from (\ref{bha}) using that $\a_j^-=\f{1}{2}(a_j-a_j^*)$ and $\int_{\a_j^{-}}v=\int_{a_j}v= -\int_{a_j^*}v$; the same applies to the derivatives with respect to $z_{2j}=\int_{\b_j^-}v$.

The computation of the derivatives with respect to $z_{2i-1+2g}=\int_{\a_{i+g}^-}v$ for $1=1,\dots,2g-3$ is only slightly more involved. 
Note that $\a_{i+g}^-=\at_i$ by (\ref{abm}). Recalling that any integral over $l_j$ 
connecting $x_1$ and $x_j$ is a linear combination of integrals over $\a_{i+g}^-$ and $\b_{i+g}^-$ with half-integer coefficients, we get
\begin{align}
\f{\p B_-(x,y)}{\p z_{i+g}}&=
\f{-1}{2\pi\sqrt{-1}}\int_{\bt_i} \f{ \Bh(x,\cdot)\left(\Bh(\cdot\,,y)-\mu^*_y\Bh(\cdot\,,y)\right)}{v(\cdot)}\nonumber\\ 
&+\f{1}{2\pi\sqrt{-1}}\sum_{j=2}^{\gh-1}\left(\f{\p }{\p z_{i+g}}\int_{l_j}v\right) \int_{r_j} \f{\Bh(x,\cdot)\left(\Bh(\cdot\,,y)-\mu^*_y\Bh(\cdot\,,y)\right)}{v(\cdot)}  \;.
\end{align}
Since the l.h.s. is symmetric in $x$ and $y$, symmetrizing the r.h.s. we get
\begin{align}
\f{\p B_-(x,y)}{\p z_{i+g}}&=
\f{-1}{2\pi\sqrt{-1}}\int_{\bt_i} \f{B_-(x,\cdot)B_-(\cdot\,,y)}{v(\cdot)}\nonumber\\ 
&+\f{1}{2\pi\sqrt{-1}}\sum_{j=2}^{\gh-1}\left(\f{\p }{\p z_{i+g}}\int_{l_j}v\right) \int_{r_j} \f{B_-(x,\cdot)B_-(\cdot\,,y)}{v(\cdot)}  \;,\quad x,\,y\in\bigcap_{j=2}^{\gh-1}\widetilde{F}_j\;.\la{varbh20}
\end{align}
The first term in the r.h.s. of (\ref{varbh20}) 
gives the necessary contribution. To show that the second term vanishes,
we notice that $\mu_y^*B_-(x,y)=-B_-(x,y)$ and $\mu^*v=-v$,
whereas the loop $r_j$ can be chosen invariant under the action of $\mu$. Therefore,
$$ \int_{r_j}  \f{B_-(x,\cdot) B_-(\cdot\,,y)}{v(\cdot)}=0.$$
In the same way (\ref{varBh}) is proved for the derivatives with respect to $z_{2i+2g}=\int_{\b_{i+g}^{-}}v$, $i=1,\dots,2g-3$. A similar computation is valid for $B_+(x,y)$ as well.
\end{proof}

For any pair $(\Ch,v)$ we define a projective connection
$S_v$ as follows. Pick a local parameter $\zeta(x)$ on $\Ch$
and put
\be
S_v=\left(\frac{v'}{v}\right)'-\frac{1}{2}\left(\frac{v'}{v}\right)^2,
\la{defSv}
\ee
where the prime means the derivative with respect to $\zeta(x)$ (in other words, $S_v$ is the Schwarzian derivative of the abelian integral $\int_{x_0}^x v$).
Taking the limit $y\to x$ in (\ref{varBh}) we get the following
\begin{corollary}
For the projective connections $S_{B_\pm}$ we have
\be
\frac{\p}{\p z_i} \left.\left(\f{S_{B_{\pm}}-S_v}{v} \right)(x)\right|_{z(x)={\rm const}}
=\f{3}{2\pi\sqrt{-1}}\int_{s_i^*} \f{B^2_\pm(x,\cdot\,)}{v(x)\,v(\cdot)}\;,\quad i=1,\dots,6g-6.
\la{derSB}
\ee
\end{corollary}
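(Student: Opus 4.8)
The plan is to read off the corollary from the coincidence limit $y\to x$ of the variational formula (\ref{varBh}), using the short-distance expansion (\ref{Bpmdiag}). The essential preliminary remark is that the distinguished coordinate $z(x)=\int_{x_1}^x v$ is \emph{flat} for $v$: in it one has $v=dz$, so that the Schwarzian $S_v$ of the abelian integral $\int v$ vanishes identically. Consequently, in this coordinate $\f{S_{B_\pm}-S_v}{v}$ is the abelian differential whose coefficient is precisely the projective-connection coefficient of $S_{B_\pm}$. Since the difference of two projective connections is a genuine quadratic differential, $\f{S_{B_\pm}-S_v}{v}$ is an honest $1$-form; this is exactly what makes the object $\f{\p}{\p z_i}\left(\f{S_{B_\pm}-S_v}{v}\right)(x)\big|_{z(x)={\rm const}}$ coordinate-independent and hence well defined, and it is the reason the subtraction of $S_v$ appears on the left.

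First I would treat (\ref{varBh}) as an identity of bidifferentials in $(x,y)$ and expand both sides as $y\to x$ in the flat coordinate. On the left, $B_\pm(x,y)=\left(\f{1}{(z(x)-z(y))^2}+\f{1}{6}S_{B_\pm}(z(x))+\dots\right)dz(x)\,dz(y)$ by (\ref{Bpmdiag}); the double-pole term has biresidue $1$ regardless of the moduli, so differentiating at fixed $z(x),z(y)$ kills it, and the finite part of $\f{\p B_\pm}{\p z_i}$ is $\f{1}{6}\,\f{\p}{\p z_i}S_{B_\pm}(z(x))$. On the right, the cycle $s_i^*$ can be chosen to avoid $x$, so the integrand $\f{B_\pm(x,\cdot)\,B_\pm(\cdot\,,y)}{v(\cdot)}$ is regular along the diagonal and, by the symmetry (\ref{sympm}), its limit is $\f{B_\pm^2(x,\cdot)}{v(\cdot)}$. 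Equating the two finite parts, dividing by $v(x)$ to pass from quadratic differentials to $1$-forms, and combining the $\f{1}{6}$ of (\ref{Bpmdiag}) with the $\f{1}{2\pi\sqrt{-1}}$ of (\ref{varBh}) produces the numerical prefactor displayed in (\ref{derSB}). The computation is identical for $B_+$ and $B_-$, so both signs are handled at once.

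The one step that requires genuine care — and the only place an error could creep in — is the interchange of the limit $y\to x$ with the derivative $\f{\p}{\p z_i}$ and with the removal of the moduli-independent singular term: one must check that the finite part of $\f{\p}{\p z_i}B_\pm$ equals $\f{\p}{\p z_i}$ of the finite part. I would justify this by observing that in the flat coordinate the regularized kernel $B_\pm(x,y)-\f{dz(x)\,dz(y)}{(z(x)-z(y))^2}$ extends holomorphically across the diagonal and depends smoothly on the coordinates $z_i$ of Lemma \ref{lemmacoord} — indeed the right-hand side of (\ref{varBh}) is manifestly regular at $y=x$ — so differentiation under the limit is legitimate. Everything else is bookkeeping of numerical constants and tensor weights; since after the limit both sides are abelian differentials in $x$, comparison of their coefficients in the coordinate $z(x)$ gives (\ref{derSB}).
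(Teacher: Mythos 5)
Your route is the same as the paper's: the authors' entire proof of this corollary is the sentence ``taking the limit $y\to x$ in (\ref{varBh})'', and the justifications you supply for that limit are correct and well taken --- in the flat coordinate $z=\int v$ one has $S_v\equiv 0$, so $(S_{B_\pm}-S_v)/v$ is the projective-connection coefficient of $S_{B_\pm}$ times $dz$; the double pole in (\ref{Bpmdiag}) is moduli-independent at fixed $z(x),z(y)$ and drops out under $\partial/\partial z_i$; the right-hand side of (\ref{varBh}) is regular on the diagonal and tends to $\int_{s_i^*}B_\pm^2(x,\cdot)/v(\cdot)$ by the symmetry (\ref{sympm}); and the interchange of the limit with $\partial/\partial z_i$ is legitimate for the reason you give.

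The gap is precisely in the step you dismiss as bookkeeping, and that step is the entire content of the corollary. Carrying out the arithmetic, the limit of (\ref{varBh}) reads
\[
\frac{1}{6}\,\frac{\partial}{\partial z_i}\left(\frac{S_{B_\pm}-S_v}{v}\right)(x)\cdot v(x)
=\frac{1}{2\pi\sqrt{-1}}\int_{s_i^*}\frac{B_\pm^2(x,\cdot)}{v(\cdot)}\;,
\]
so the prefactor you actually obtain is $\frac{6}{2\pi\sqrt{-1}}=\frac{3}{\pi\sqrt{-1}}$ --- twice the constant displayed in (\ref{derSB}). As written, your argument therefore proves the formula with the wrong constant; equivalently, it shows that (\ref{varBh}) and (\ref{derSB}) as printed are mutually inconsistent, which a complete proof must confront rather than paper over. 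The resolution is that the constant in (\ref{varBh}) itself should be $\frac{1}{4\pi\sqrt{-1}}$: restricting (\ref{bha})--(\ref{bhli}) to $\Mgn$ and writing $\widehat{B}=\frac{1}{2}(B_++B_-)$, the mixed products $B_\pm(x,\cdot)\,B_\mp(\cdot,y)/v(\cdot)$ are $\mu$-invariant one-forms in the integration variable and hence integrate to zero over the anti-invariant cycles $s_i^*$, so only half of the $B_\pm B_\pm$ term survives. The same value is forced by internal consistency of Section \ref{varform}: integrating (\ref{varBh}), with unknown constant $K$, in $x$ over $\b_j^\pm$ and in $y$ over $\b_k^\pm$ and using (\ref{perBpm}) gives $\frac{\partial}{\partial z_i}(\Omega_\pm)_{jk}=4\pi\sqrt{-1}\,K\int_{s_i^*}u_j^\pm u_k^\pm/v$, and comparison with (\ref{varsipm}) yields $K=\frac{1}{4\pi\sqrt{-1}}$. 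With this corrected $K$ your limit computation gives $6K=\frac{3}{2\pi\sqrt{-1}}$, i.e., exactly (\ref{derSB}). So you chose the right approach, but by asserting instead of tracking the numerical factor you missed the factor of two on which the stated formula hinges.
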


The next corollary gives the variational formulas for the normalized abelian differentials $u_j^\pm$ and the period matrices $\wO_\pm$:
\begin{corollary}
We have
\begin{align}
&\left.\frac{\p u_j^\pm(x)}{\p z_i} \right|_{z(x)={\rm const}}= 
\int_{s_i^*} \f{ u_j^\pm(\cdot) B_\pm(x,\cdot\,)}{v(\cdot)}\;,
\la{varupm}\\
&\left(\frac{\p \wO_\pm}{\p z_i}\right)_{jk} = \int_{ s_i^*}\f{ u_j^\pm u_k^{\pm}}{v}\;.
\la{varsipm}
\end{align}
\end{corollary}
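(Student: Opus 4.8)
The plan is to derive both identities from the variation of the bidifferential in Lemma \ref{varBhteo}, exploiting the fact that the normalized differentials and the period matrices are themselves periods of $B_\pm$. The only inputs are the period representation (\ref{perBpm}), the symmetry (\ref{sympm}), and the variational formula (\ref{varBh}); the whole argument amounts to two applications of the same device: differentiate a period of $B_\pm$, then re-read the resulting inner period as a differential.

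First I would establish (\ref{varupm}). By (\ref{perBpm}) together with the symmetry (\ref{sympm}), the normalized differential is a fixed period of the bidifferential,
\[
u_j^\pm(x)=\frac{1}{4\pi\sqrt{-1}}\int_{\beta_j^\pm}B_\pm(x,\cdot)\,.
\]
Choosing a representative of the class $\beta_j^\pm$ that avoids the zeros $x_1,\dots,x_{4g-4}$ of $v$ and that is held fixed in the flat coordinate $z$ (so that $z(\cdot)$ is constant along it as $z_i$ varies), I can differentiate under the integral sign and apply Lemma \ref{varBhteo} pointwise along the cycle. Substituting (\ref{varBh}) and interchanging the $\beta_j^\pm$-integration with the $s_i^*$-integration by Fubini, the inner $\beta_j^\pm$-period of $B_\pm$ is evaluated once more by (\ref{perBpm}), which replaces it by a multiple of $u_j^\pm$ inside the $s_i^*$-integral and produces (\ref{varupm}) after collecting the numerical constants.

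The period-matrix formula (\ref{varsipm}) then follows by repeating the device one level higher. Writing the $\beta$-period $(\wO_\pm)_{jk}=\int_{\beta_j^\pm}u_k^\pm$, I differentiate in $z_i$, move $\partial/\partial z_i$ inside the integral over $\beta_j^\pm$, and insert the formula (\ref{varupm}) just proved. Interchanging the two integrations and evaluating the remaining inner $\beta_j^\pm$-period of $B_\pm$ by (\ref{perBpm}) turns the factor $B_\pm$ into $u_j^\pm$, leaving the symmetric bilinear expression $\int_{s_i^*}u_j^\pm u_k^\pm/v$. Equivalently, one may write $(\wO_\pm)_{jk}$ as the double period $\frac{1}{4\pi\sqrt{-1}}\int_{\beta_j^\pm}\int_{\beta_k^\pm}B_\pm$ and differentiate directly, which makes the symmetry in $j$ and $k$ manifest.

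The step I expect to be the main obstacle is analytic rather than algebraic: justifying that $\partial/\partial z_i$ commutes with the contour integrals and that Fubini applies. This forces one to fix the representing cycles $\beta_j^\pm$ and the dual cycles $s_i^*$ off the singular locus --- away from the zeros of $v$, where $B_\pm/v$ has poles and the flat coordinate $z$ degenerates, and away from one another, so that the diagonal pole of $B_\pm$ is never crossed --- and to verify that Lemma \ref{varBhteo}, stated for fixed $z(x)$ and $z(y)$, can be applied uniformly as the integration point runs over the frozen cycle. Once the contours are pinned down in this way, the integrands depend holomorphically on $z_i$, the interchanges are legitimate, and only the bookkeeping of the $2\pi\sqrt{-1}$ factors coming from (\ref{perBpm}) and (\ref{varBh}) remains.
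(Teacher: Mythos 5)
Your proposal is precisely the paper's own proof, which the authors compress into two sentences: (\ref{varupm}) is obtained from (\ref{varBh}) and (\ref{perBpm}) by the differentiate-a-period device, and (\ref{varsipm}) from (\ref{varupm}) together with the definition of $\wO_\pm$ as matrices of $\beta$-periods of the $u_j^\pm$ (you correctly read (\ref{sigp})--(\ref{sigm}) as $\beta$-periods; the $\a$'s written there are a typo, since as printed those integrals would just equal $\delta_{ij}$). The one substantive caveat concerns exactly the step you defer, ``collecting the numerical constants'': with the paper's stated normalizations they do not close up. Running your chase literally, the two uses of (\ref{perBpm}) in your first step cancel each other and the prefactor $\frac{1}{2\pi\sqrt{-1}}$ of (\ref{varBh}) survives, so you land on (\ref{varupm}) with an extra factor $\frac{1}{2\pi\sqrt{-1}}$; feeding that into your second step, the single remaining conversion by (\ref{perBpm}) contributes $4\pi\sqrt{-1}$, leaving (\ref{varsipm}) with an extra factor $2$. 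This is an inconsistency internal to the paper rather than a flaw in your argument: in the proof of Lemma \ref{varBhteo}, the replacement of $\Bh(x,\cdot)\,B_-(\cdot,y)$ by $B_-(x,\cdot)\,B_-(\cdot,y)$ silently drops the factor $\frac12$ coming from $\Bh=\frac12\left(B_++B_-\right)$ (the $B_+$ cross term integrates to zero by the parity argument), so the self-consistent choice is $\frac{1}{4\pi\sqrt{-1}}$ as the prefactor of both (\ref{varBh}) and (\ref{varupm}), which then reproduces (\ref{varsipm}) exactly as stated --- the form that is actually used later in passing from (\ref{transtau1}) to (\ref{transtau2}).
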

\begin{proof}
The formula (\ref{varupm}) is a corollary of (\ref{varBh}) and (\ref{perBpm}). In turn, (\ref{varsipm}) follows from
(\ref{varupm}) and the definition of the period matrices $\wO_\pm$ (\ref{sigp}),
(\ref{sigm}).
\end{proof}

\section{Tau functions}
\la{secdeftau}

Here we define the necessary tau functions and study their basic properties. Our definition generalizes the
notion of the Bergman tau function in the theory of Frobenius manifolds \cite{Dubrovin}, isomonodromic
deformations \cite{IMRN} and the spectral theory of flat Laplacians on Riemann surfaces \cite{JDG}. Our exposition closely follows the recent papers \cite{Adv, MRL}.

\subsection{Definition of $\tau_\pm$.}
Consider the two meromorphic quadratic differentials $S_{B_\pm}-S_v$, where $S_{B_\pm}$ are the Bergman projective connections
(\ref{defSbpm}) and $S_v$ is given by (\ref{defSv}). 
Take the trivial line bundle on $\Mgn$ and consider two connections
\be
d_\pm=d+\qb_\pm\label{conn}
\ee
where 
\be
\qb_\pm=-\sum_{i=1}^{6g-6} \left(\int_{s_i^*} \phi_\pm\right)d\int_{s_i} v\;,
\la{qbk}
\ee
$\{s_i\}=\{\a_i,\b_i\}$ and $\{s_i^*\}=\{\b_i\, -\a_i\},\; i=1,\dots,3g-3,$ being the dual bases in $H_-$ as above, 
and the meromorphic abelian differentials $\phi_\pm$ are given by
\be
\phi_\pm=-\f{2}{\pi\sqrt{-1}}\,\f{ S_{B_{\pm}}-S_v}{v}\;.
\la{defOk}
\ee
These connections are flat (cf. \cite{JDG} for details).

\begin{definition}
The tau functions $\tau_\pm$ are (locally) covariant constant sections of the trivial line bundle on $\Mgn$ with the connections $d_\pm$, that is,
\be
d_\pm\tau_\pm=0\;.
\la{deftau}
\ee
\end{definition}

As we discussed in the previous section, the moduli space $\Mgn$ can be considered as a subspace of the
moduli space  $\strat$ of genus $\gh=4g-3$ abelian differentials with $\gh-1$ zeros of multiplicity 2.
Following \cite{MRL}, we define a tau function $\taut$ on $\strat$ as follows.
 
For an arbitrary point $(\Ct,\vt)$ in a tubular neighborhood ${\mathcal U}$ of  $\Mgn$ in
 $\strat$ consider two dual bases in $H_1(\Ct, \{x_1,\dots,x_{\gh-1}\})$ and $H_1(\Ct\setminus \{x_1,\dots,x_{\gh-1}\})$ as in Lemma \ref{varBhteo}, and let $\Bt$
be the corresponding Bergman bidifferential on $\Ct\times\Ct$. Define a connection in the trivial line bundle
on $\mathcal{U}$ by 
\be
\widetilde{d}=d+\widetilde{\qb}\;,
\la{defdh}
\ee
where
\be
\widetilde{\qb}=-\sum_{i=1}^{\gh}\left( \left(\int_{a_i} \Oh\right) d \int_{b_i}\vt
-\left(\int_{b_i} \Oh\right) d\int_{a_i} \vt \right)- \sum_{j=2}^{\gh-1}\left(\int_{r_j} \Oh\right) 
d\int_{l_j} \vt 
\la{qbh}
\ee
and
$$
\Oh=-\f{2}{\pi\sqrt{-1}}\f{ S_{\Bt}-S_{\vt}}{\vt}\;.
$$

The Bergman tau function $\taut$ on  the tubular neighborhood ${\mathcal U}$ of $\Mgn$ in
 the moduli space $\strat$ is a covariant constant section of the trivial line bundle on ${\mathcal U}$
with respect to the connection $\widetilde{d}$:
\be
\widetilde{d}\,\taut = 0\;.
\ee

\begin{lemma}
The restriction $\tauh=\taut\left|_{\Mgn}\right.$  is related to the tau functions $\tau_\pm$ on  $\Mgn$ by
\be
 \tauh^2=\tau_+ \tau_-
\la{reltau}
\ee
\end{lemma}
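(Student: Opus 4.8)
The plan is to compare the defining connections $d_\pm$ on $\Mgn$ with the restriction of the connection $\widetilde{d}$ from $\strat$, and to show that the one-form governing $\tauh^2=\taut^2|_{\Mgn}$ equals the sum of the one-forms governing $\tau_+$ and $\tau_-$. Since all three tau functions are covariant-constant sections of trivial line bundles, the claimed relation $\tauh^2=\tau_+\tau_-$ will follow (up to a multiplicative constant that one normalizes away) once we verify the corresponding additive identity for logarithmic derivatives, namely
\be
2\,\widetilde{\qb}\big|_{\Mgn}=\qb_++\qb_-\;.
\la{keyone}
\ee

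First I would restrict the one-form $\widetilde{\qb}$ of \eqref{qbh} to $\Mgn$ using Lemma \ref{lemmacoord}, which expresses the homological coordinates on $\strat$ (the $a$-, $b$-periods and the relative periods $\int_{l_j}v$) as explicit half-integer linear combinations of the $6g-6$ coordinates $z_i=\int_{s_i}v$ on $\Mgn$. On $\Mgn$ the abelian differential $\Oh$ becomes $-\tfrac{2}{\pi\sqrt{-1}}(S_{\Bh}-S_v)/v$, and by the definition \eqref{defSbpm} of $S_{B_\pm}$ we have the clean splitting
\be
S_{\Bh}-S_v=\tfrac12\big[(S_{B_+}-S_v)+(S_{B_-}-S_v)\big]\;,
\la{splitproj}
\ee
so that $\Oh|_{\Mgn}=\tfrac12(\phi_++\phi_-)$ with $\phi_\pm$ as in \eqref{defOk}. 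This immediately separates $\widetilde{\qb}$ into a $(+)$-part and a $(-)$-part, matching the structure of the right-hand side of \eqref{keyone}.

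The heart of the argument is then to check that, coordinate by coordinate, the restricted differential $d\int_{a_i}\vt$, $d\int_{b_i}\vt$, $d\int_{l_j}\vt$ reorganizes exactly into $d\int_{s_i}v$ paired against the correct dual cycle $s_i^*$, and that the antisymmetric contributions cancel. Here I would use the symmetry relations $\int_{a_j^*}v=-\int_{a_j}v$, $\int_{b_j^*}v=-\int_{b_j}v$ from Lemma \ref{lemmacoord}, the eigenvalue properties $\mu_y^*B_\pm=\pm B_\pm$ of \eqref{Bpm}, and $\mu^*v=-v$. The crucial cancellation is that the $(-)$-eigen quantities integrated over the $\mu$-invariant loops $r_j$ vanish — exactly the mechanism already exploited in the proof of Lemma \ref{varBhteo} to kill the second term of \eqref{varbh20}. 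The same vanishing shows that the relative-period terms $\int_{r_j}\Oh\,d\int_{l_j}\vt$ contribute nothing to the $(-)$-part, while the $(+)$-part of these terms folds into the $a$-, $b$-period sums via the half-integer transition matrices $T$, $S$ of \eqref{mainmat}–\eqref{MaMb}.

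I expect the main obstacle to be bookkeeping the half-integer symplectic change of basis cleanly: one must confirm that, after substituting the relations \eqref{abp}, \eqref{abm} and using $\Oh|_{\Mgn}=\tfrac12(\phi_++\phi_-)$, the Poincar\'e-dual pairing in \eqref{qbh} reassembles precisely as $-\sum_i(\int_{s_i^*}\phi_\pm)\,d\int_{s_i}v$ with the correct dual basis $\{s_i^*\}=\{\b_i^-,-\a_i^-\}$, and that no cross terms between $\phi_+$ and the $(-)$-cycles (or vice versa) survive. That cross-term vanishing is guaranteed by property \eqref{BpH} of the previous lemma — $B_+$ integrates to zero over $H_-$ and $B_-$ over $H_+$ — which I would invoke at the level of the projective connections through their period relations. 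Once \eqref{keyone} is established, integrating gives $\taut^2|_{\Mgn}=\text{const}\cdot\tau_+\tau_-$, and fixing the overall normalization yields \eqref{reltau}.
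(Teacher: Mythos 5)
Your proposal is correct and follows essentially the same route as the paper's proof: the key identity $2\widetilde{\qb}\,|_{\Mgn}=\qb_++\qb_-$ obtained from $2\Oh=\phi_++\phi_-$ (a consequence of (\ref{defSbpm})), the vanishing of the $\int_{r_j}$-terms because $r_j$ is $\mu$-invariant while $\Oh$ and $\phi_\pm$ are $\mu$-anti-invariant, and the symplectic reorganization of the pairing using the vanishing of the $H_+$-periods of $v$. The only slight misattribution is your appeal to (\ref{BpH}) for the cross-term cancellation: in the paper this cancellation needs nothing beyond the block-diagonal form (\ref{mainmat}) of the change of basis and the fact that $d\int_{\a_i^+}v=d\int_{\b_i^+}v=0$ identically on $\Mgn$, so the period properties of $B_\pm$ never enter.
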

\begin{proof}
According to (\ref{defSbpm}),  we have the relation 
\be
2\Oh= \phi_++\phi_-\;.\label{phi}
\ee
between the meromorphic differentials $\Oh$ and $\phi_\pm$ on $\Ch$. Moreover,  the following integrals vanish
\be
\int_{r_j} \phi_\pm(x)=\int_{r_j} \Oh(x)=0\;,\qquad j=2,\dots,\gh-1\;,
\la{intvanish}
\ee
since $r_j$ are invariant under the action of $\mu$ while $\Oh$ and $\phi_\pm$ change sign.
Therefore, the second sum in the r.h.s. of (\ref{qbh}) vanishes on $\Mgn$. By means of the symplectic transformation   (\ref{mainmat})
the form $\widehat{\qb}=\widetilde{\xi}\left|_{\Mgn}\right.$ can be re-written in terms of the $(\a,\b)$-periods as follows:
\begin{align}
\widehat{\qb}&=\sum_{i=1}^{2g}\left(\left(\int_{\b_i^+}\Oh\right)d\int_{\a_i^+} v
- \left(\int_{\a_i^+}\Oh\right)d\int_{\b_i^+} v\right) \nonumber\\
&+\sum_{j=1}^{3g-3}\left(\left(\int_{\b_j^{-}}\Oh\right)d\int_{\a_j^{-}} v  - 
\left(\int_{\a_j^{-}}\Oh\right)d\int_{\b_j^{-}} v\right)\;.
\end{align}
All $\a^+$- and $\b^+$-periods of $v$ vanish since $\mu^*v=-v$. Thus by (\ref{phi}) we have 
$2\widehat{\qb}=\qb_-+\qb_+$ that proves the lemma.
\end{proof}

\subsection{Explicit formulas.}
To study transformation properties of the tau functions we will need explicit formulas for them 
(cf. \cite{JDG}). We start with $\tauh$ and denote by $\Eh(x,y)$ the prime form on $\Ch$, by $\wO$ -- the matrix of $b$-periods, and by $\thetah(w,\wO),\; w\in \C^{\gh}$ -- the associated Riemann theta-function.
Consider local coordinates on $\Ch$ that we call {\em natural} (or {\em distinguished}) with respect to the differential $v$. 
We take $\zeta(x)=\int_{x_1}^x v$ as a local coordinate on $\Ch\setminus\{x_1,\dots,x_{4g-4}\}$ and choose a local coordinate $\zetah_k$  
near $x_k\in \Ch$ in such a way that $v=d(\zetah_k^3)=3\zetah_k^2\,d\zetah_k$, $k=1,\dots,4g-4$.
 In terms of these coordinates we have 
$\Eh(x,y)=\frac{\Eh(\zeta(x),\zeta(y))}{\sqrt{d\zeta}(x)\sqrt{d\zeta}(y)}$, and we define
\begin{eqnarray*}
\Eh(\zeta,x_k)&=&\lim_{y\rightarrow x_k}\Eh(\zeta(x),\zeta(y))\sqrt{\frac{d\zetah_k}{d\zeta}}(y),\\
\Eh(x_k,x_l)&=&\lim_{\stackrel{\scriptstyle x\rightarrow x_k}{y\rightarrow x_l}}\Eh(\zeta(x),\zeta(y))
\sqrt{\frac{d\zetah_k}{d\zeta}}(x)\sqrt{\frac{d\zetah_l}{d\zeta}}(y)\,.
\end{eqnarray*}
Let $\Ah^x$ be the Abel map on $\Ch$ with the base point $x$, and let
 $\Kh^x=(\Kh^x_1,\dots,\Kh^x_{\gh})$ be the vector of Riemann constants
\be
\Kh^x_i=\frac{1}{2}+\frac{1}{2}\wO_{ii}-\sum_{j\neq i}\int_{a_i}\left(u_i(y)\int_x^y u_j\right)dy
\label{rc}\ee
(as always, we assume that the integration paths do not intersect the chosen cycles $a_i,b_i$ on $C$);
then $\Ah^x((v))+2\Kh^x=\sh \Zh+\Zh'$ for some $\Zh,\Zh'\in\Z^{\gh}$ . 

The tau function $\tauh=\tauh(\Ch,v)$ is explicitly given by the formula (cf. (3.8) of \cite{MRL} with all $m_k=2$):
\be
\tauh(\Ch,v)=\frac{\left( \left.\left(\sum_{i=1}^{\gh} u_i(\zeta)\frac{\partial}{\partial w_i}\right)^{\gh}
\thetah(w;\wO)\right|_{w=\Kh^{\zeta}}\right)^{16}}{e^{4\pi\sqrt{-1}\langle\wO \Zh+4\Kh^{\zeta},\Zh\rangle}\,\Wh(\zeta)^{16}}\cdot
\f{\prod_{k<l}\Eh(x_k,x_l)^{16}}{\;\;\;\prod_k \Eh(\zeta,x_k)^{16(\gh-1)}}\;,
\label{taukk}
\ee
where $\Wh(\zeta)$ is the Wronskian of the normalized abelian differentials 
$u_1,\dots, u_{\gh}$ on $\Ch$.

\smallskip
The tau function $\tau_+$ has a similar form but it is defined in terms of the curve $C$.
The natural local parameters on $C$ near the points $x_k$ are given by squares of the natural local parameters on the cover $\Ch$:
\be
\zeta_k(x)= \zetah_k^2(x)
\ee
 In terms of these coordinates the prime form on $C$ can be written as
$E(x,y)=\frac{E(\zeta(x),\zeta(y))}{\sqrt{d\zeta}(x)\sqrt{d\zeta}(y)}$, and we put
\begin{eqnarray*}
E(\zeta,x_k)&=&\lim_{y\rightarrow x_k}E(\zeta(x),\zeta(y))\sqrt{\frac{d\zeta_k}{d\zeta}}(y),\\
E(x_k,x_l)&=&\lim_{\stackrel{\scriptstyle x\rightarrow x_k}{y\rightarrow x_l}}E(\zeta(x),\zeta(y))
\sqrt{\frac{d\zeta_k}{d\zeta}}(x)\sqrt{\frac{d\zeta_l}{d\zeta}}(y)\,.
\end{eqnarray*}
The basis of normalized abelian differentials on $C$  is given by $(u_1^+,\dots,u_g^+)$. 
We denote by ${\mathcal A}^x$ the  Abel map on $C$ with the base point $x$, and by $K^x=(K^x_1,\dots,K^x_{g})$ -- the vector of Riemann constants; we have $\f{1}{2}{\mathcal A}^x((q))+2K^x=\O_+ Z+Z'$ for some $Z,Z'\in\left(\f{1}{2}\Z\right)^g$.

The tau function $\tau_+$ is explicitly given by the following formula (cf. (3.8) of \cite{MRL} with all $m_k=1/2$\footnote{Rigorously speaking, formula (3.8) of \cite{MRL} applies only for integer $m_k$. However, its proof is valid for half-integer $m_k$ as well, see \cite{JDG}.}):
\be
\tau_+(C, q)= \f{\left(\left.\left(\sum_{i=1}^g u^+_i(\zeta)\frac{\partial}{\partial w_i}\right)^g
\theta(w;\O_+)\right|_{w=K^{\zeta}}\right)^{16}}{e^{4\pi\sqrt{-1}\langle\O_+ Z+4K^{\zeta},Z\rangle}\;W(\zeta)^{16}}
\cdot\frac{\prod_{k<l}E(x_k,x_l)}
{\quad\prod_k E(\zeta,x_k)^{4(g-1)}}\;,
\label{taukk1}
\ee
where $W(\zeta)$ is the Wronskian of the normalized abelian differentials 
$u_1^+,\dots, u_g^+$ on $C$ and $\theta$ is the Riemann theta-function.

Formula (\ref{reltau}) then yields an explicit expression for the tau function $\tau_-$.

\subsection{Transformation properties of $\tau_\pm$.}
The group $\C^*$ of non-zero complex numbers acts by multiplication on quadratic differential $q$ on $C$
and, therefore, on the tau functions $\tau_\pm$.
\begin{theorem} \la{hompro}
Under the action of $\e\in\C^*$ the tau functions $\tau_\pm$ transform like $\tau_\pm(C,\e q)= \e^{p_\pm} \tau_\pm(C, q)$, where
\be
p_+={\f{20}{3}(g-1)}\;,\quad p_-={\f{44}{3}(g-1)} \;.
\la{homcom}
\ee
\end{theorem}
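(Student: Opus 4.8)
The plan is to read off the two exponents directly from the explicit product formulas \eqref{taukk} and \eqref{taukk1}, combined with the relation \eqref{reltau}. The starting point is that multiplying $q$ by $\e\in\C^*$ is the same as rescaling the abelian differential on the fixed cover by $v\mapsto\e^{1/2}v$: the abstract curves $C$ and $\Ch$, their period matrices, theta functions, prime forms (as sections), Abel maps and vectors of Riemann constants, and the zero divisor $x_1+\dots+x_{4g-4}$, are all untouched by the $\C^*$-action, so only $v$ and the distinguished parameters built from it move. From $\zeta=\int v$, $v=d(\zetah_k^3)$ and $\zeta_k=\zetah_k^2$ these rescale as $\zeta\mapsto\e^{1/2}\zeta$, $\zetah_k\mapsto\e^{1/6}\zetah_k$ and $\zeta_k\mapsto\e^{1/3}\zeta_k$. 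Consequently the whole $\e$-dependence of $\tauh$ and of $\tau_+$ enters only through these parameters, and determining $p_\pm$ reduces to a homogeneity count.

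Next I would compute the power of $\e$ carried by each factor of \eqref{taukk} and \eqref{taukk1}. Since $\sum_i u_i(\zeta)\,\p/\p w_i$ acts as $d/d\zeta$ and $\zeta\mapsto\e^{1/2}\zeta$, this operator scales by $\e^{-1/2}$; hence the theta-Wronskian numerators scale by $\e^{-\gh/2}$ (resp. $\e^{-g/2}$) before being raised to the sixteenth power, while the Wronskians $\Wh(\zeta)$ and $W(\zeta)$ of the $\gh$ (resp. $g$) normalized differentials scale by $\e^{-\gh(\gh+1)/4}$ (resp. $\e^{-g(g+1)/4}$). The exponential and Riemann-constant factors are $\e$-neutral, being assembled from intrinsic data. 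The surviving factors are the prime forms at the branch points; from the definitions of $\Eh(\zeta,x_k)$, $\Eh(x_k,x_l)$ and $E(\zeta,x_k)$, $E(x_k,x_l)$ these pick up fractional powers of $\e$ through the half-integer powers of $d\zetah_k$ and $d\zeta_k$. Summing all contributions produces $p_{\tauh}$ (the degree of $\tauh$) and $p_+$ outright; then \eqref{reltau}, i.e. $\tauh^2=\tau_+\tau_-$, gives $p_-=2\,p_{\tauh}-p_+$, which finishes \eqref{homcom}.

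The hard part will be the fractional weight bookkeeping of the prime-form products at the branch points. These weights are multiples of $1/6$ on $\Ch$ and of $1/3$ on $C$, their exponents grow linearly in the genus, and the leading quadratic-in-$g$ contributions of $W(\zeta)^{16}$, of $\prod_{k<l}E(x_k,x_l)$ and of $\prod_k E(\zeta,x_k)$ must cancel exactly so that $p_+$ comes out linear in $g$; any sign or factor slip in those fractional exponents corrupts the final degrees. I would guard against this by carrying out the same computation for $\tauh$ and checking the resulting $p_{\tauh}$ against the known homogeneity of the Bergman tau function on the stratum $\strat$, which independently pins down the sum $p_++p_-$ through \eqref{reltau}.
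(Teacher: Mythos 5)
Your route is genuinely different from the paper's, and it contains a real gap rather than a mere bookkeeping risk. The paper never derives Theorem \ref{hompro} from the explicit formulas: $\tau_\pm$ are \emph{defined} by the flat connections \eqref{conn}--\eqref{deftau}, and the exponents are computed from that definition. Concretely, since $z_i\mapsto\e^{1/2}z_i$, one has $p_\pm=\f12\sum_i z_i\,\p_{z_i}\log\tau_\pm$; the defining form \eqref{qbk} turns this into a symplectic pairing of the periods of $v$ and $\phi_\pm$, which the Riemann bilinear relations convert into the local expression \eqref{pkres}, $p_\pm=2\sum_j{\rm Res}_{x_j}\bigl(\f{S_{B_\pm}-S_v}{v}\int_{x_j}^x v\bigr)$; each of the $4g-4$ residues is then evaluated in a coordinate where $S_{\Bh}=0$, giving $\f43$ from $S_{\Bh}-S_v$ and $\mp\f12$ from the term $\pm 6\,\mu_y^*\Bh(x,y)|_{y=x}$, i.e. $\f56$ for $\tau_+$ and $\f{11}{6}$ for $\tau_-$, whence \eqref{homcom}. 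The explicit theta-function formulas enter the paper only afterwards, to fix the cube root of unity in Theorem \ref{thsymptra}.

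The gap in your plan is that it stands or falls with the exact normalization of \eqref{taukk1}, and that normalization is precisely what cannot be taken at face value. Your individual scaling weights are right ($u_i^+(\zeta)\mapsto\e^{-1/2}u_i^+(\zeta)$, $E(x_k,x_l)\mapsto\e^{1/3}E(x_k,x_l)$, $E(\zeta,x_k)\mapsto\e^{5/12}E(\zeta,x_k)$, Wronskian weight $\e^{-g(g+1)/4}$), but summing them for \eqref{taukk1} as printed gives total exponent $-8g+4g(g+1)+\f13\binom{4g-4}{2}-\f{20(g-1)^2}{3}=\f{10(g-1)}{3}$, exactly \emph{half} of $p_+$; the same count applied to \eqref{taukk} gives $\f{32(g-1)}{3}$, which is consistent, being $\f12(p_++p_-)$. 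The discrepancy arises because \eqref{taukk1} is the genus-$g$ formula of \cite{MRL} for the pair $(C,\sqrt q)$ with $m_k=1/2$ (a case whose validity the paper itself only asserts in a footnote), whereas $\tau_+$ is normalized in \eqref{qbk} through periods over $H_-$ of the cover, which doubles all logarithmic derivatives; so the printed right-hand side of \eqref{taukk1} can represent at most $\tau_+^{1/2}$ up to a constant, not $\tau_+$. Executed faithfully, your method therefore outputs $p_+=\f{10(g-1)}{3}$, and your proposed safeguard cannot rescue it: matching $p_{\tauh}$ against the known stratum homogeneity through \eqref{reltau} constrains only the \emph{sum} $p_++p_-$, so the error is silently shifted into $p_-$, producing $18(g-1)$ instead of $\f{44}{3}(g-1)$. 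To make your route sound you would first have to establish the explicit formula for $\tau_+$ together with its normalization directly from the defining connection --- at which point you would have reproduced the paper's residue computation anyway.
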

\begin{proof}
First let us prove that
\be
p_{\pm}=2\sum_{j=1}^{4g-4}{\rm Res}_{x_j}\left(\f{ S_{B_{\pm}}-S_v}{v}(x) \int_{x_j}^x v \right)\;.
\la{pkres}
\ee
The homological coordinates $z_i$ given by the periods of the abelian differential $v=q^{1/2}$
transform as $z_i\mapsto \e^{1/2} z_i $ under the action $q\mapsto \e q$. Therefore, we get
$p_\pm$ applying the Euler vector field $\f{1}{2}\sum_{i=1}^{6g-6} z_i\f{\p}{\p z_i}$
to $\log\tau_{\pm}$:
\be
p_\pm=\f{1}{2}\sum_{i=1}^{6g-6} z_i\f{\p\log\tau_\pm}{\p z_i} = \f{1}{2}\sum_{i=1}^{6g-6} \left(\int_{s_i} v\right)  
\left(\int_{s_i^*}\phi_\pm\right)\;.
\la{hom1}
\ee
In terms of $\{\a^{-}_i,\b^{-}_i\}$, we can rewrite (\ref{hom1})  as
\be
p_{\pm}=-\f{1}{2}\sum_{i=1}^{3g-3}\left(\left(\int_{\a^{-}_i} v\right) \left(\int_{\b^{-}_i}\phi_\pm\right)-
\left(\int_{\b^{-}_i} v\right) \left(\int_{\a^{-}_i}\phi_\pm\right)\right)\;.
\la{hom2}
\ee
Since the periods of $v$ over the elements of $H_+$ vanish, we can extend the summation over the basis 
$\{\a^{\pm}_i\,,\b^{\pm}_i\}$ in $H_1(\Ch,\R)=H_+\oplus H_-$.
Applying the symplectic transformation (\ref{mainmat}), we further rewrite (\ref{hom2})
in terms of the canonical basis (\ref{mainbasis}) in $H_1(\Ch,\Z)$, and use the Riemann bilinear relations to obtain
\begin{align}
p_{\pm} &= \sum_{i=1}^{4g-3} \left(- \left(\int_{a_i} v \right) 
\left(\int_{b_i}\phi_\pm\right)+\left(\int_{b_i} v\right) \left(\int_{a_i}\phi_\pm\right)\right)\nonumber\\
&= -2\pi\sqrt{-1} \sum_{j=1}^{4g-4} {\rm Res}_{x_j}\left( \phi_\pm(x) \int_{x_0}^x v\right)
\la{hom5}
\end{align}
for an arbitrary initial point  $x_0$. Moreover, by (\ref{intvanish})
\be\la{resxl1}
{\rm Res}_{x_j}\left(\phi_\pm(x) \int_{x_0}^x v\right) = {\rm Res}_{x_j}\left(\phi_\pm(x) \left(\int_{x_0}^{x_j} v
+ \int_{x_j}^{x} v\right)\right)=
{\rm Res}_{x_j}\left(\phi_\pm(x) \int_{x_l}^{x} v\right)\;,
\ee
which implies (\ref{pkres}).

Let us now compute the residues. Recall that by (\ref{defSbpm}) we have 
$S_{B_\pm}(x)=S_{\Bh}(x)\pm 6 \mu_y^*\Bh(x,y)\left|_{y=x}\right.$. Choose a coordinate $\zeta$
in a neighborhood of $x_j$ such that
$S_{\Bh}=0$. In this coordinate $v=c\,(\zeta^2+O(\zeta^3))d\zeta$ and $\int_{x_j}^x v = c\,\zeta^{3}/3+O(\zeta^4)$ for 
some constant $c\neq 0$. Therefore, near $x_j$ we have
\begin{align*}
&S_v=\left(\f{v'}{v}\right)'-\f{1}{2}\left(\f{v'}{v}\right)^2
=-4\left(\f{1}{\zeta^2}+O\left(\f{1}{\zeta}\right)\right) d\zeta^2,\\
&\f{S_v}{v}\int_{x_l}^x v = -\f{4}{3}\left(\f{1}{\zeta}+O(1)\right) d\zeta\;,
\end{align*}
so that
\be
{\rm Res}_{x_j}\left(\f{S_{\widehat{B}}-S_v}{v}(x)\int_{x_l}^x v\right)= \f{4}{3}\;.
\la{compres1}
\ee
Moreover, we can choose $\zeta$ in such a way that $\zeta(\mu(x))=-\zeta(x)$. Then near $x_j$ we have
$$
\Bh(x,\mu(x))= \f{d\zeta(x) \,d\zeta(\mu(x))}{(\zeta(x)-\zeta(\mu(x)))^2}+\dots = -\f{1}{4}\left( \f{1}{\zeta^2}+O\left(\f{1}{\zeta}\right)\right) d\zeta^2
$$
and 
\be
{\rm Res}_{x_j}\left(\f{\Bh(x,\mu(x))}{v(x)}\int_{x_j}^x v\right)=-\f{1}{12}\;.
\la{compres2}
\ee
Combining  (\ref{defSbpm}) with (\ref{compres1}) and (\ref{compres2}),
 we get (\ref{homcom}).
\end{proof}

Now we want to study the behavior of the tau functions under a change of the canonical homology basis that commutes with the action of the involution $\mu$. Note that $\tau_+$ (resp. $\tau_-$) is uniquely determined by the subspace in $H_+$ (resp. $H_-$) generated by the images of the classes $\a_i^+$ (resp. $\a_j^-$) under the natural homomorphism $H_1(\Ch\setminus\{x_1,\dots,x_{4g-4}\},\R)\to H_1(\Ch,\R)=H_+\oplus H_-$. 

\begin{theorem}\la{thsymptra}
Let $\s$ be a symplectic transformation of $H_1(\Ch,\R)$ commuting with $\mu_*$ that is given by the matrices $\s^\pm$ in the basis $\{\a_i^+,\b_i^+,\a_j^-,\b_j^-\},\; i=1,\dots,g,\; j=1,\dots,3g-3$,  as in (\ref{Qpm}).
Then the tau functions $\tau_\pm$ transform under the action of $\s$ by the formula
\be
 \f{\tau_\pm^{\s}}{\tau_\pm}=  \gamma_\pm(\s_\pm) \;{\rm det} (C_\pm\Omega_\pm + D_\pm)^{48}
\la{transtau3}
\ee
where $\gamma_\pm(\s_\pm)^3 = 1$.
\end{theorem}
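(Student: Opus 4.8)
The plan is to derive (\ref{transtau3}) from the defining equation $d_\pm\tau_\pm=0$, which gives $d\log\tau_\pm=-\qb_\pm$, so that
\ben
d\log\f{\tau_\pm^{\s}}{\tau_\pm}=-\left(\qb_\pm^{\s}-\qb_\pm\right),
\een
where $\qb_\pm^{\s}$ denotes the one-form (\ref{qbk}) built from the transformed homology basis $\s(\{s_i\})$. The whole problem thus reduces to exhibiting $\qb_\pm^{\s}-\qb_\pm$ as an exact logarithmic form, after which only a multiplicative constant will remain. The first step is a symplectic-invariance observation: both $\phi_\pm$ and $v$ are anti-invariant under $\mu^*$, so their periods over $H_+$ vanish and, for such anti-invariant closed one-forms, the bilinear expression $\sum_i(\int_{s_i^*}\phi_\pm)\,d\!\int_{s_i}v$ in (\ref{qbk}) equals, up to sign, the intrinsic Riemann pairing $\int_{\Ch}\phi_\pm\wedge dv$, which is independent of the choice of basis of $H_-$. (The residues of $\phi_\pm$ vanish by (\ref{intvanish}), so these periods are well defined.) Hence $\qb_\pm^{\s}$ may be rewritten over the \emph{original} cycles $s_i,s_i^*$ with $\phi_\pm$ replaced by its transform $\phi_\pm^{\s}$, giving
\ben
\qb_\pm^{\s}-\qb_\pm=-\sum_{i=1}^{6g-6}\left(\int_{s_i^*}\left(\phi_\pm^{\s}-\phi_\pm\right)\right)d\int_{s_i}v .
\een

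Next I would compute $\phi_\pm^{\s}-\phi_\pm$. Since $S_v$ does not depend on the homology basis, (\ref{defOk}) together with the transformation law (\ref{transSb}) yields
\ben
\phi_\pm^{\s}-\phi_\pm=-\f{2}{\pi\sqrt{-1}}\,\f{S_{B_\pm}^{\s}-S_{B_\pm}}{v}=\f{48}{v}\,\bu_\pm^t(C_\pm\Omega_\pm+D_\pm)^{-1}C_\pm\,\bu_\pm .
\een
Substituting this and invoking the variational formula (\ref{varsipm}), which reads $\sum_i\big(\int_{s_i^*}\f{u_j^\pm u_k^\pm}{v}\big)\,d\!\int_{s_i}v=(d\Omega_\pm)_{jk}$, together with the symmetry of $\Omega_\pm$, I obtain
\ben
\qb_\pm^{\s}-\qb_\pm=-48\,{\rm tr}\!\left((C_\pm\Omega_\pm+D_\pm)^{-1}C_\pm\,d\Omega_\pm\right)=-48\,d\log{\rm det}(C_\pm\Omega_\pm+D_\pm),
\een
the last step because $C_\pm,D_\pm$ are constant. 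Therefore $d\log(\tau_\pm^{\s}/\tau_\pm)=48\,d\log{\rm det}(C_\pm\Omega_\pm+D_\pm)$, and integrating over the connected space $\Mgn$ gives
\ben
\f{\tau_\pm^{\s}}{\tau_\pm}=\gamma_\pm(\s_\pm)\,{\rm det}(C_\pm\Omega_\pm+D_\pm)^{48}
\een
with $\gamma_\pm(\s_\pm)$ independent of $(C,q)$. The standard cocycle identity for $\det(C\Omega+D)$ under composition of symplectic maps, combined with $\tau_\pm^{\s\s'}=(\tau_\pm^{\s'})^{\s}$, forces $\gamma_\pm$ to be multiplicative in $\s_\pm$, i.e. a character of the group of $\mu_*$-commuting symplectic transformations.

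The remaining point, and the one I expect to be hardest since the differential-equation argument cannot detect it, is to show $\gamma_\pm(\s_\pm)^3=1$. For this I would return to the explicit formula (\ref{taukk1}) for $\tau_+$ and, through (\ref{reltau}), for $\tau_-$, and substitute the classical transformation laws under $\s_\pm$ for the Riemann theta function and its order-$g$ directional derivative (contributing ${\rm det}(C_\pm\Omega_\pm+D_\pm)^{1/2}$ together with an eighth root of unity), for the normalized differentials via (\ref{transupm}), and for the Wronskian, the prime form, and the vector of Riemann constants (which shifts by a half-period). The $\det$-dependent automorphy factors must reassemble into the power $48$ already found, which provides a consistency check, while the accumulated roots of unity collapse into $\gamma_\pm$. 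The delicate part is that $\s_\pm$ has half-integer entries, so the theta transformation has to be carried out on the rescaled period lattice; tracking the contribution of the distinguished local parameters at the branch points, where $v=d(\zetah_k^3)=3\zetah_k^2\,d\zetah_k$ is cubic, is what produces a cube root of unity and yields $\gamma_\pm(\s_\pm)^3=1$.
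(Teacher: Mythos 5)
Your proposal follows essentially the same route as the paper: the paper likewise differentiates $\log(\tau_\pm^{\s}/\tau_\pm)$ using the definition $d_\pm\tau_\pm=0$ together with the transformation law (\ref{transSb}) for $S_{B_\pm}$, identifies the result with $48\,\f{\p}{\p z_i}\log\det(C_\pm\Omega_\pm+D_\pm)$ via the variational formulas (\ref{varsipm}), and then pins down $\gamma_\pm$ by appealing to the explicit formulas (\ref{taukk}), (\ref{taukk1}) and the fact that the distinguished local parameters $\zetah_k$ are defined only up to a third root of unity. Your added justifications (vanishing residues and symplectic invariance of the pairing defining $\qb_\pm$, which let one compare the two connection forms over the same cycles) are points the paper leaves implicit, and your final step is sketched at essentially the same level of detail as the paper's own one-line argument, with the same key observation that the cube-root ambiguity of the local coordinates at the branch points is what forces $\gamma_\pm(\s_\pm)^3=1$.
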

\begin{proof}
From the the definition of the tau functions (\ref{deftau}) we get by means of (\ref{transSb}) that
\be
\f{\p}{\p z_i}\log \f{\tau_\pm^{\s}}{\tau_\pm} = 
48 \int_{s_i^*}\f{ \bu_{\pm}^t(x)   (C_\pm\Omega_\pm + D_\pm)^{-1} C_\pm  \bu_\pm(x)}{v(x)}
\la{transtau1}
\ee
Using the variational formulas (\ref{varsipm}) for the matrices $\widehat{\Omega}_\pm$, we
can rewrite (\ref{transtau1}) as follows:
\be
\f{\p}{\p z_i}\log \f{\tau_\pm^{\s}}{\tau_\pm} =  48 \f{\p}{\p z_i}\log
  {\rm det} (C_\pm\Omega_\pm + D_\pm)\;,
\la{transtau2}
\ee
which implies (\ref{transtau3}) for some constants $\gamma_\pm(\s)$ independent of $z_i$.

To show that $\gamma_\pm(\s)^3 = 1$ we use the explicit formula (\ref{taukk1}) for $\tau_+$ or (\ref{taukk}) together with (\ref{reltau}). The local coordinates 
near $x_k$ in these formulas are given by
\be
\zetah_k(x)=\left(\int_{x_k}^x v\right)^{2/3}
\la{locpar2}
\ee
and are uniquely defined up to a 3rd root of unity that implies $\gamma_\pm(\s_\pm)^3=1$.
\end{proof}

The following statement is an immediate consequence of Theorems \ref{hompro} and \ref{thsymptra}.

\begin{theorem}\label{sect}
The tau function $\tau_+$ and  $\tau_-$ are nowhere vanishing holomorphic sections of the line bundles 
$\lambda^{48}\otimes L^{-\f{20(g-1)}{3}}$ and $\lambda_P^{48}\otimes L^{-\f{44(g-1)}{3}}$
on the (projectivized) moduli space $P\Mgn$ respectively, where $L$ is the tautological line bundle on $P\Mgn$.
\end{theorem}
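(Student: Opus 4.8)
The statement is essentially a bookkeeping consequence of the two transformation laws already in hand, so the plan is to read off the line bundle from them. Theorem \ref{hompro} records the scaling of $\tau_\pm$ under the $\C^*$-action $q\mapsto\e q$ that defines $P\Mgn$, and Theorem \ref{thsymptra} records the change of $\tau_\pm$ under a symplectic change of the canonical homology basis commuting with $\mu_*$. These are precisely the two factors of automorphy that specify a line bundle on $P\Mgn$: the first pins down the power of the tautological bundle $L$, the second the power of the Hodge, resp.\ Prym, determinant. Non-vanishing and holomorphy I would obtain directly from the flat-connection definition.

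First, non-vanishing and holomorphy. By the definition (\ref{deftau}), $\tau_\pm$ is covariant constant for $d_\pm=d+\qb_\pm$, so locally $d\log\tau_\pm=-\qb_\pm$ and $\tau_\pm=\mathrm{const}\cdot\exp\!\big(-\!\int\qb_\pm\big)$. On $\Mgn$ the form $\qb_\pm$ of (\ref{qbk}) is a \emph{holomorphic} $1$-form: the abelian differentials $\phi_\pm$ have poles only at the $x_j$, while the cycles $s_i^*\in H_-$ avoid them, so the periods $\int_{s_i^*}\phi_\pm$ are finite (and the $dz_i$ are holomorphic). Hence $\tau_\pm$ is holomorphic and nowhere vanishing on $\Mgn$. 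For the tautological factor, recall that $P\Mgn=\Mgn/\C^*$ and that $L$ is the tautological bundle, so a holomorphic function on $\Mgn$ homogeneous of degree $p$ under $q\mapsto\e q$ is precisely a section of $L^{-p}$. By Theorem \ref{hompro} and (\ref{homcom}), $\tau_+$ has degree $p_+=\f{20}{3}(g-1)$ and $\tau_-$ degree $p_-=\f{44}{3}(g-1)$, contributing the factors $L^{-20(g-1)/3}$ and $L^{-44(g-1)/3}$.

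Next, the determinant factors. The classes $\lambda$ and $\pt$ are the first Chern classes of the Hodge bundle $\Lambda_+$ and the Prym bundle $\Lambda_-$, that is, of the line bundles $\det\Lambda_\pm$. A local frame of $\det\Lambda_\pm$ is the top exterior power $u_1^\pm\wedge\cdots\wedge u_{r_\pm}^\pm$ of the normalized basis $\bu_\pm$ (with $r_+=g$, $r_-=3g-3$), and by (\ref{transupm}) this frame transforms under $\s$ commuting with $\mu_*$ by the factor $\det(C_\pm\Omega_\pm+D_\pm)^{-1}$. Comparing with (\ref{transtau3}), where $\tau_\pm^{\s}/\tau_\pm=\gamma_\pm(\s_\pm)\,\det(C_\pm\Omega_\pm+D_\pm)^{48}$, the product $\tau_\pm\,(u_1^\pm\wedge\cdots\wedge u_{r_\pm}^\pm)^{48}$ is invariant under the change of basis up to the factor $\gamma_\pm$. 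Thus $\tau_\pm$ is the component, in the local frame $(u_1^\pm\wedge\cdots)^{48}$, of a section of $(\det\Lambda_\pm)^{48}$; that is, $\tau_+$ is a section of $\lambda^{48}$ and $\tau_-$ of $\pt^{48}$, tensored with the tautological factors of the previous step. This exhibits the asserted bundles.

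The main obstacle is conceptual rather than computational and concerns the finite character $\gamma_\pm$. Theorem \ref{thsymptra} only gives $\gamma_\pm(\s_\pm)^3=1$, so strictly $\tau_\pm$ is a section of the claimed bundle only up to a cube root of unity; this ambiguity is the $\Z/3$ freedom in the distinguished coordinates $\zetah_k=\big(\int_{x_k}^x v\big)^{2/3}$ entering the explicit formulas (\ref{taukk}), (\ref{taukk1}). I would argue that, being of order dividing $3$, $\gamma_\pm$ defines at worst a torsion twist that is invisible in the rational (orbifold) Picard group ${\rm Pic}(P\Mgn)\otimes\Q$ of Lemma \ref{pic}, which is the setting of the main theorem; alternatively one passes to a finite cover on which $\gamma_\pm$ trivializes. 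The remaining point — that $\det(C_\pm\Omega_\pm+D_\pm)$ is exactly the automorphy factor of $\det\Lambda_\pm$ — is routine once (\ref{transupm}) is in hand.
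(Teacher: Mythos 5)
Your proposal is correct and takes essentially the same route as the paper: the paper presents Theorem \ref{sect} as an immediate consequence of Theorems \ref{hompro} and \ref{thsymptra}, which are exactly the two factors of automorphy (the $\C^*$-homogeneity fixing the power of $L$, and the symplectic transformation law $\det(C_\pm\Omega_\pm+D_\pm)^{48}$ fixing the power of $\lambda$, resp. $\lambda_P$) that you combine. Your explicit handling of the cube-root character $\gamma_\pm$ — torsion, hence invisible in the rational (orbifold) Picard group where the fractional powers of $L$ already force one to work — fills in the one point the paper leaves implicit.
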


\begin{corollary}
The Hodge and Prym classes in the rational Picard group of $\Mgn$ satisfy the relations
\be
\lambda=\f{5(g-1)}{36}\psi\;,\qquad \lambda_P=\f{11(g-1)}{36}\psi\;,
\la{PTnon-compp}
\ee
where $\psi=c_1(L)$. In particular, $\lambda_P= \f{11}{5}\, \lambda$ on $P\Mgn$.
\end{corollary}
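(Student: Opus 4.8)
The plan is to read the corollary off directly from Theorem \ref{sect}, using the elementary principle that a holomorphic line bundle carrying a nowhere-vanishing holomorphic section is trivial, and hence has vanishing first Chern class. Since Theorem \ref{sect} asserts that $\tau_+$ and $\tau_-$ are precisely such sections of $\lambda^{48}\otimes L^{-\f{20(g-1)}{3}}$ and $\lambda_P^{48}\otimes L^{-\f{44(g-1)}{3}}$ on $P\Mgn$, both bundles are trivial, and reading off the resulting linear relations among their Chern classes yields the two formulas.

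Concretely, first I would record that for a holomorphic line bundle $E$ over a complex (orbifold) base $X$ a nowhere-vanishing holomorphic section $s$ gives an isomorphism $\mathcal O_X \xrightarrow{\sim} E$, $f\mapsto f\,s$, so that $c_1(E)=0$ in $\mathrm{Pic}(X)\otimes\Q$. To make sense of the fractional exponent $-\f{20(g-1)}{3}$ (which need not be an integer) I would pass to an honest line bundle by cubing: $\tau_+^{3}$ is a nowhere-vanishing holomorphic section of the genuine line bundle $\lambda^{144}\otimes L^{-20(g-1)}$ on $P\Mgn$. The principle above then gives $c_1(\lambda^{144}\otimes L^{-20(g-1)})=0$, i.e.
\[
144\,\lambda - 20(g-1)\,\psi = 0,
\]
using $\psi=c_1(L)$ and additivity of $c_1$ under tensor products; hence $\lambda=\f{20(g-1)}{144}\,\psi=\f{5(g-1)}{36}\,\psi$.

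In exactly the same way, $\tau_-^{3}$ trivializes $\lambda_P^{144}\otimes L^{-44(g-1)}$, so $144\,\lambda_P - 44(g-1)\,\psi = 0$ and therefore $\lambda_P=\f{44(g-1)}{144}\,\psi=\f{11(g-1)}{36}\,\psi$. Comparing the two expressions gives $\lambda_P=\f{11}{5}\,\lambda$ on $P\Mgn$. I do not expect a genuine obstacle here, since the entire content of the corollary is already carried by Theorem \ref{sect}; the only point needing a moment's care is the interpretation of the fractional powers of $L$, which is dealt with either by the cubing device above or, equivalently, by working from the outset in $\mathrm{Pic}(P\Mgn)\otimes\Q$, where $\Q$-line bundles and their rational Chern classes are well defined. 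The more substantive inputs — the homogeneity weights $p_\pm$ from Theorem \ref{hompro} and the modular transformation law from Theorem \ref{thsymptra} that together assemble $\tau_\pm$ into sections of the stated bundles — have already been established, so the remaining argument is purely formal.
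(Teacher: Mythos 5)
Your proposal is correct and is exactly the deduction the paper intends: the corollary is stated as an immediate consequence of Theorem \ref{sect}, namely that a nowhere-vanishing holomorphic section trivializes the line bundle, so $48\,\lambda-\f{20(g-1)}{3}\,\psi=0$ and $48\,\lambda_P-\f{44(g-1)}{3}\,\psi=0$ in ${\rm Pic}(P\Mgn)\otimes\Q$, giving the stated formulas. Your cubing device is a reasonable way to handle the fractional powers of $L$ (and, incidentally, also absorbs the cube-root-of-unity ambiguity $\gamma_\pm$ from Theorem \ref{thsymptra}), though working directly in the rational Picard group, as the paper implicitly does, is equivalent.
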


\subsection{Tau functions on $D_{deg}$.}
Here we briefly discuss the tau functions on the open subset $P\Qc_g^{1}\subset D_{deg}$, where $\Qc_g^{1}$ parameterizes
holomorphic quadratic differentials of degeneracy 1 (that is, with one zero $x_{0}$ of multiplicity $2$ and $4g-6$ simple zeros $x_1,\dots,x_{4g-6}$). We will need it later in studying the asymptotic behavior of the tau functions $\tau_\pm$. The exposition closely follows the previous subsections of this section.

As in the case of generic quadratic differentials, each pair $(C,q)\in\Qc_g^{1}$ gives rise to the canonical cover $\Ch$. The curve $\Ch$ is irreducible, has a node at the preimage of $x_0$ and is smooth everywhere else (slightly abusing the notation, we will use the symbol $\Ch$ for both the covering curve and its normalization).  The abelian differential $v=\sqrt{q}$ on $\Ch$ has $4g-6$ double zeros on $\Ch$ at the branch points of the cover. Additionally,  $v$ has two simple zeros at the preimages $x_0',\;x_0''$ of $x_0\in C$ in the normalization of $\Ch$ . The genus of $\Ch$ is then equal to $4g-4$.

The group $H_1(\Ch,\R)$ decomposes into the direct sum $H_+\oplus H_-$ of $\pm 1$-eigenspaces of $\mu_*$, where ${\rm dim} H_+=2g$,   ${\rm dim} H_-=6g-8$.
Like we did before, we pick a basis $\{s_i\}=\{\a_i,\b_{i-3g+4},\; i=1,\dots, 6g-8,$ in $H_-$. 
We also choose a path $l$ connecting the points $x_0'$ and $x_0''$ on $\Ch$ that does not intersect 
the cycles representing the classes $s_i$. A local coordinate system on $\Qc_g^1$ is given by $6g-7$ homological coordinates
$$z_0=\int_{l}v\;,\qquad
z_i=\int_{s_i} v\;,\quad i=1,\dots,6g-8\;.$$

The tau functions $\tau'_\pm$ on the space  $\Qc^1$ are defined similar to (\ref{conn})-(\ref{deftau}). 
Put
\be
\qb'_\pm=  -\left(\int_{r} \phi_\pm\right) d \int_{l} v -\sum_{i=1}^{6g-8} \left(\int_{s_i^*} \phi_\pm\right) d \int_{s_i} v\;,
\ee
where
\be
\phi_\pm=-\f{2}{\pi\sqrt{-1}}\,\f{ S_{B_{\pm}}-S_{v}}{v}\;,
\la{defOk1}
\ee
$\{s_i^*\}$ is the basis in $H_-$ dual to $\{s_i\}$, and $r$ is a small loop around $x_0''$. Consider the connections $d'_\pm=d+\qb'_\pm$ in the trivial line bundle on $\Qc^1_g$, and define $\tau'_\pm$ by the equation $d'_\pm\tau'_\pm=0$.

Like in the case of $\Mgn$, we denote by $\tauh'$ the restriction to $\Qc_g^1$  of the tau function 
on the stratum  $\H_{4g-4}([2^{4g-6}\;1^{2}])$ of the moduli space of abelian differentials (cf. \cite{JDG}), and similarly to (\ref{reltau}) we have $\tauh'^2= \tau'_-\tau'_+ $.

The homogeneity coefficients of the tau functions $\tau'_\pm$ with respect to the action of $\C^*$ are given by

\begin{lemma}
The tau functions $\tau'_\pm$ transform under the action of $\e\in\C^*$ as follows:
\be
\tau'_+(C,\e q)= \e^{\f{20}{3}g-7} \tau'_+(C, q)\;,\qquad
\tau'_-(C,\e q)= \e^{\f{44}{3}g-19} \tau'_-(C, q)\;.
\la{homcom1}
\ee
\end{lemma}
The proof repeats verbatim the proof of Theorem \ref{hompro}, and we omit it here.

\section{The Hodge and Prym classes on $P\Mgno$}
\la{secasymp}
Here we compute the divisors of the tau functions $\tau_\pm$ viewed as holomorphic sections of line bundles on the compactification $P\Mgno$  of the space $P\Mgn$. The
boundary  $P\Mgno\setminus P\Mgn $ consists of $[g/2]+2$ divisors that we defined in Section 1 and denoted by $D_{deg},\;D_0$ and $D_j, \; j=1,\dots,[g/2]$.

\subsection{Asymptotics of $\tau_\pm$ near $D_{deg}$}
The divisor $D_{deg}$ is the closure of the set of isomorphism classes of pairs $(C,q)$ modulo the action of $\C^*$, where $C$ is a smooth curve and $q$ is a holomorphic quadratic differential with at least one non-simple zero. 
We look for the asymptotic behavior of $\tau_\pm(C,q)$ on $\Mgn$ when two simple zeroes of $q$ (say, $x_1$ and $x_2$) coalesce into a zero of order 2. Such a deformation is described by a family 
$(C_t,q_t)\in\Mgn$, where  $(C_t,q_t)\to (C_0,q_0)\in\Mgn^1$ as $t\to 0$.

Assuming that the zeroes $x_1^t$ and $x_2^t$ of $q_t$ are close to each other, consider a small disc
$U_t\subset C_t$ that contains $x_1^t,\; x_2^t$ and no other zeros of $q_t$. Connect 
$x_1^t$ and $x_2^t$ with a path $l_t\subset U_t$. The path $l_t$ lifts to a non-trivial 
cycle $s_t=f_t^{-1}(l_t)$ on the canonical cover $\Ch_t$ such that $\mu_t(s_t)=-s_t$. Therefore, without loss of generality we can assume that $s_t=\a_1^-\in H_-$. Clearly, $s_t$ contracts to the node of $\Ch_0$ when $t\to 0$ so that the homology class $\a_1^-$ vanishes. 

Recall that the first homological coordinate on $\Mgn$ is given by
\be
z_1=\int_{\a_1^-} v=\int_{s_t} v_t\;,
\la{tdeg}
\ee
where $v_t^2=f_t^*(q_t)$, cf. (\ref{zi}). Thus, $z_1$ vanishes on $\Mgn^1$.

\begin{lemma}
The coordinate $t=z_1$ is a transversal to $\Mgn^1$ local coordinate in a tubular neighborhood of $\Mgn^1$ in $\Mgn$.
\end{lemma}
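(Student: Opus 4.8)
The plan is to show that the single complex parameter $t=z_1$ serves as a transversal local coordinate, i.e. that $\Mgn^1$ is cut out locally by the single equation $z_1=0$ and that $z_1$ together with a choice of coordinates on $\Mgn^1$ forms a coordinate system on $\Mgn$ near a point of $\Mgn^1$. The strategy rests on the homological coordinate description established in Lemma \ref{lemmacoord}: since the $z_i$ are local coordinates on $\Mgn$, it suffices to exhibit a suitable subset of the remaining periods that restrict to a coordinate system on the degeneration locus $\Mgn^1$, and to verify that $z_1$ is genuinely transversal to it.

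First I would recall the dimension count. The space $\Mgn$ has complex dimension $6g-6$, while the stratum $\Qc_g^1$ of quadratic differentials with one double zero and $4g-6$ simple zeros has dimension $6g-7$ (as is visible from its $6g-7$ homological coordinates $z_0, z_1,\dots,z_{6g-8}$ listed just above). Hence $\Mgn^1$ is a divisor, of codimension one, and a transversal coordinate is exactly what is needed to describe the normal direction. The key geometric input is the discussion preceding the lemma: the vanishing cycle $s_t=\a_1^-$ is the lift of a path $l_t$ joining the two colliding simple zeros $x_1^t,x_2^t$, it is anti-invariant under $\mu_t$, and it contracts to the node of $\Ch_0$ as $t\to0$. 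Consequently the period $z_1=\int_{\a_1^-}v_t$ measures precisely the separation of the two zeros and vanishes identically on $\Mgn^1$, by \eqref{tdeg}.

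The core of the argument is then to build a coordinate chart adapted to the degeneration. On $\Mgn$ we already have the coordinates $z_1,\dots,z_{6g-6}$ of \eqref{zi}, with $z_1=\int_{\a_1^-}v$. I would argue that the functions $z_2,\dots,z_{6g-6}$ (the periods over the remaining cycles $s_2,\dots,s_{6g-6}$ in $H_-$) restrict, upon setting $z_1=0$, to local coordinates on $\Mgn^1$. Indeed, on the limiting curve $\Ch_0$ the vanishing of $\a_1^-$ reduces $H_-$ by one anti-invariant cycle, and the surviving cycles pair with the abelian differential $v_0=\sqrt{q_0}$ to give exactly the $6g-8$ homological coordinates $z_i$ of $\Qc_g^1$ together with the relative period $z_0=\int_l v$ across the node; this matches the $6g-7$ count above. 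The transversality statement is then equivalent to the claim that $dz_1$ does not vanish on the conormal bundle of $\Mgn^1$, which follows because $z_1$ is already one of the coordinate functions in a system on $\Mgn$ and $\{z_1=0\}$ is exactly $\Mgn^1$: the differentials $dz_1,\dots,dz_{6g-6}$ are linearly independent, so $dz_1$ is nonzero modulo the tangent directions to $\Mgn^1$.

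The main obstacle I anticipate is the careful bookkeeping at the node: as $t\to0$ the period $z_1$ degenerates and one must make sure that the period map $(\Ch_t,v_t)\mapsto(z_1,\dots,z_{6g-6})$ extends smoothly (in fact holomorphically in $t$) across $t=0$ and that its restriction to $\{z_1=0\}$ lands precisely in the period coordinates of $\Qc_g^1$. This requires tracking how the homology basis of $H_-(\Ch_t)$ specializes to a basis on the normalization of $\Ch_0$ once the vanishing cycle $\a_1^-$ is removed, and checking that the remaining anti-invariant periods, plus the relative period $z_0$ across the now-pinched loop, account for all deformations of $(C_0,q_0)$ within $\Qc_g^1$. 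Once this compatibility of bases is in place, the transversality of $t=z_1$ follows immediately from the fact that $z_1$ is a coordinate function cutting out the divisor, and the lemma is proved.
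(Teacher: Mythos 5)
There is a genuine gap, and it sits exactly at the step you declare immediate. Your argument is: $z_1$ belongs to a coordinate system on $\Mgn$ (Lemma \ref{lemmacoord}), its zero locus is $\Mgn^1$, hence $dz_1$ is nonzero transversally to $\Mgn^1$. But Lemma \ref{lemmacoord} makes the $z_i$ coordinates only on the open stratum $\Mgn$, where the cycle $\a_1^-$ exists; it says nothing about how $z_1$ behaves at points of $\Mgn^1$, where that cycle has collapsed. Being part of a coordinate system off a divisor does not imply transversality at the divisor: on $\C_u\times\C^*_w$ the pair $(u,w^2)$ is a local coordinate system near every point of the open part, and $\{w^2=0\}$ is exactly the divisor $\{w=0\}$, yet $w^2$ is not a transversal coordinate since $d(w^2)$ vanishes along it. Indeed, your reasoning applied verbatim to $z_1^2$ (which is also a member of a coordinate system $(z_1^2,z_2,\dots,z_{6g-6})$ on $\Mgn$ near points with $z_1\neq 0$, with the same zero locus) would ``prove'' that $z_1^2$ is transversal, which is impossible if $z_1$ is. What must actually be shown --- and what you explicitly set aside as ``the main obstacle'' --- is that $z_1$ extends to a \emph{single-valued} holomorphic function on a neighborhood of $\Mgn^1$ whose zero locus is precisely $\Mgn^1$ and which vanishes there to order exactly one. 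Single-valuedness is itself nontrivial: the monodromy around $D_{deg}$ interchanges the two colliding zeros and can reverse the cycle $\a_1^-$ and the sign of $v=\sqrt{q}$, so a priori $z_1$ is only defined up to sign near the boundary.

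The paper settles all of this with one short local computation, which is the real content of the proof and is absent from your proposal: choosing a coordinate $\zeta$ on a small disc around the colliding zeros in which $q=(\zeta-\zeta(x_1^t))(\zeta-\zeta(x_2^t))\,d\zeta^2$, one gets $z_1=2\int_{x_1^t}^{x_2^t} v=\mathrm{const}\cdot(\zeta(x_1^t)-\zeta(x_2^t))^2$. The right-hand side is symmetric under relabeling of the zeros, hence single-valued and holomorphic near $\Mgn^1$; it is the discriminant of the pair of zeros, which cuts out $\Mgn^1$ with multiplicity one, so it (and therefore $z_1$) is a transversal coordinate. Your dimension count and the matching of the surviving periods $z_2,\dots,z_{6g-6}$ with the coordinates $z_0,z_1,\dots,z_{6g-8}$ of $\Qc_g^1$ are reasonable in spirit, but they are not needed for the lemma and cannot substitute for this computation, which pins down the vanishing order and the single-valuedness at once.
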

\begin{proof} 
We can choose a coordinate $\zeta$ on $U_t$ in such a way that
$q(\zeta)=(\zeta-\zeta(x_1^t))(\zeta-\zeta(x_2^t))d\zeta^2$.
Then
\begin{align}
z_1=2\int_{x_1^t}^{x_2^t} v=2\int_{\zeta(x_1^t)}^{\zeta(x_2^t)} 
\sqrt{(\zeta-\zeta(x_1^t))(\zeta-\zeta(x_2^t))}d\zeta= 
\text{const} \cdot (\zeta(x_1^t)-\zeta(x_2^t))^2\;.\label{z1}
\end{align}
Since $ (\zeta(x_1^t)-\zeta(x_2^t))^2$ is independent of the labeling of the zeros of $q_t$, it is a coordinate transversal to $\Mgn$, and so is $t=z_1$.
\end{proof}

The asymptotics of $\tau_\pm$ near $\Mgn^1$ follows from
\begin{lemma}
The tau functions $\tau_\pm(C_t,q_t)$ have the following asymptotic behavior as $t\to 0$:
\be
\tau_\pm(C_t,q_t)= t^{\kappa_\pm}\tau_\pm(C_0,q_0) (c_\pm+o(1))\;,
\la{ddeg}
\ee 
where  $\kappa_+= 2/3$ and  $\kappa_-= 26/3$.
\la{aseven2}
\end{lemma}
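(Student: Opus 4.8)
The plan is to reduce the statement to the leading singular behaviour, as $t=z_1=\int_{\a_1^-}v\to0$, of the logarithmic derivative of $\tau_\pm$ along the transversal coordinate $t$, and then to integrate. From the defining equation $d_\pm\tau_\pm=0$ together with (\ref{qbk}) and the identification $s_1^*=\b_1^-$ of the dual cycle, the derivative of $\log\tau_\pm$ in the direction $z_1$ is the period
\be
\f{\p}{\p z_1}\log\tau_\pm=\int_{\b_1^-}\phi_\pm\;,\qquad \phi_\pm=-\f{2}{\pi\sqrt{-1}}\,\f{S_{B_\pm}-S_v}{v}\;.
\la{plan1}
\ee
Thus it is enough to show that $\int_{\b_1^-}\phi_\pm=\kappa_\pm/t+O(1)$ as $t\to0$, with the $O(1)$ part integrable down to $t=0$; integrating in $t$ then produces (\ref{ddeg}), the finite part of $\log\tau_\pm$ converging to $\log(c_\pm\,\tau'_\pm(C_0,q_0))$ for nonzero constants $c_\pm$ (here $\tau_\pm(C_0,q_0)$ is understood as the boundary tau function $\tau'_\pm$).

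To pin down the exponents $\kappa_\pm$ cheaply I would first exploit the $\C^*$-homogeneities already at hand. Assuming the self-similar form (\ref{ddeg}) with $\tau'_\pm$ as coefficient, apply the rescaling $q_t\mapsto\e q_t$: the transversal coordinate obeys $t\mapsto\e^{1/2}t$ since $v=\sqrt{q}$, the left-hand side of (\ref{ddeg}) picks up the factor $\e^{p_\pm}$ by Theorem \ref{hompro}, and $\tau'_\pm(C_0,q_0)$ picks up $\e^{p'_\pm}$ by the homogeneity of $\tau'_\pm$ computed just above. Matching powers of $\e$ forces $\kappa_\pm=2(p_\pm-p'_\pm)$. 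With $p_+=\tfrac{20}{3}(g-1)$, $p_-=\tfrac{44}{3}(g-1)$ and $p'_+=\tfrac{20}{3}g-7$, $p'_-=\tfrac{44}{3}g-19$ one gets $p_+-p'_+=\tfrac13$ and $p_--p'_-=\tfrac{13}{3}$, hence $\kappa_+=\tfrac23$ and $\kappa_-=\tfrac{26}{3}$, as claimed.

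The substantive part, and the main obstacle, is establishing the self-similar form itself, i.e. that the ratio in (\ref{ddeg}) has a finite nonzero limit whose coefficient is exactly $\tau'_\pm(C_0,q_0)$. I would do this by a Fay-type plumbing analysis of (\ref{plan1}) near the forming node. In suitable coordinates the canonical cover is $w^2=(\zeta-a)(\zeta-b)$ with $a-b\to0$, so that $v=w\,d\zeta$ has two double zeros at the branch points $\zeta=a,b$ that merge, the cycle $\a_1^-$ encircling the slit pinches, and $t\sim\mathrm{const}\cdot(a-b)^2$ by (\ref{z1}); the dual cycle $\b_1^-$ degenerates to a path joining the two preimages $x_0',x_0''$ of the node. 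Inserting the degeneration expansion of $\Bh$ (hence of $S_{B_\pm}$ via (\ref{defSbpm})) as $\a_1^-$ pinches, together with the local behaviour of $v$ near its colliding zeros, into the period $\int_{\b_1^-}\phi_\pm$, the singular part comes entirely from the neck: there $1/v$ has a simple pole at each of $x_0',x_0''$, and a residue computation structurally identical to (\ref{compres1})--(\ref{compres2}) produces the pole $\kappa_\pm/t$, while the complementary region contributes the regular part. The sign distinguishing the two eigenvalues enters through $S_{B_\pm}=S_{\Bh}\pm6\,\mu_y^*\Bh(x,y)|_{y=x}$: the contribution of the second preimage of the node adds for the $+$ connection and subtracts for the $-$ connection, which is the analytic origin of the two exponents.

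An equivalent route avoids splitting the eigenvalues in the neck and uses the relation $\tauh^2=\tau_+\tau_-$ of (\ref{reltau}). Here $\tau_+$ lives on the base curve $C$, which stays smooth while two simple zeros of $q$ collide, so its exponent $\kappa_+=\tfrac23$ can be read off the explicit formula (\ref{taukk1}) from the degeneration of the prime-form factor $E(x_1,x_2)$ in the natural coordinates together with the collapse of those coordinates, using $\sqrt{t}\sim(x_1-x_2)$; meanwhile $\tauh$ lives on $\Ch$, whose genus drops by one through the pinching of the nonseparating cycle $\a_1^-$, so its exponent $\tfrac{14}{3}$ is the standard Bergman tau asymptotics on strata of abelian differentials from \cite{JDG, MRL}, and then $\kappa_-=2\cdot\tfrac{14}{3}-\tfrac23=\tfrac{26}{3}$. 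Either way, the delicate point is the uniform control of the subleading terms near the node, and in particular showing that the finite part of $\int_{\b_1^-}\phi_\pm$ reconstructs the boundary tau function $\tau'_\pm(C_0,q_0)$ rather than merely staying bounded.
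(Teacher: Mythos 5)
Your proposal is correct and is essentially the paper's own argument: the exponents are pinned down exactly as in the paper, by matching $\C^*$-weights between $\tau_\pm$ (Theorem \ref{hompro}, eq.~(\ref{homcom})) and the boundary tau functions $\tau'_\pm$ (eq.~(\ref{homcom1})), which forces $\kappa_\pm=2(p_\pm-p'_\pm)$, i.e. $\kappa_+=2/3$ and $\kappa_-=26/3$. The self-similar factorization that you single out as the main remaining obstacle is precisely what the paper settles first and briefly, by invoking the variational formulas of Section \ref{varform} (the derivatives of $\log\tau_\pm$ in the directions $z_2,\dots,z_{6g-6}$ stay finite as $t=z_1\to 0$, so the ratio $\tau_\pm(C_t,q_t)/\tau_\pm(C_0,q_0)$ is asymptotically a function $c_\pm(t)$ of $t$ alone); your plumbing analysis and the alternative route via $\tauh^2=\tau_+\tau_-$ are heavier substitutes for that one step.
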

\begin{proof}
Let us consider a deformation where $t=z_1\to 0$ and $z_2,\dots,z_{6g-6}$ remain fixed.
From the analysis of Section 3 we have $\f{d}{dt}\log\tau_\pm(C_t,q_t)\to 0$ and
$$\f{\tau_\pm(C_t,q_t)}{\tau_\pm(C_0,q_0)}=c_\pm(t)(1+o(t))$$
as $t\to 0$. Under the multiplication $q\mapsto \e q,\; \e\in\C^*,$ the coordinate $t=z_1$ transforms like $t\mapsto \e^{1/2} t$, cf. (\ref{z1}). From (\ref{homcom}) and (\ref{homcom1}) we get that
\be
c_+(\e^{1/2} t)= \e^{1/3} c_+(t),\quad c_-(\e^{1/2} t)=\e^{13/3} c_-(t)\;.
\la{pp0pm0}
\ee
Therefore, $c_+(t)=t^{2/3}$ and $c_-(t)=t^{26/3}$.
\end{proof}

\subsection{Asymptotics of $\tau_\pm$ near $D_0$}
The divisor $D_0$ is the closure of the set of the isomorphism classes of pairs $(C,q)$ modulo the action of $\C^*$, where $C$ is a curve with a non-separating node and $q$ is a regular meromorphic quadratic differential on the normalization of $C$ with poles of order not higher than 2 at the preimages of the node with equal quadratic residues and holomorphic everywhere else.  The dimension of the space of regular meromorphic quadratic differentials on a stable curve of genus $g$ is $3g-3$, so that all the fibers of the projection $p:\Mgno\to\Mb_g$ have the same dimension.

Consider a one parameter family $(C_t,q_t)\in\Mgn$ that is transversal to $D_0$ at some point $(C_0,t_0)\in D_0$. As $t\to 0$, a homologically non-trivial cycle on $C_t$, say $a_1$, contracts to a node of $C_0$, while $q_t$ degenerates to a regular meromorphic differential on $C_0$ with two second order poles, say $x_0,\;y_0$, on the normalization of $C_0$. The canonical cover $\Ch_t$ deforms into an irreducible curve $\Ch_0$ with two nodes represented by two pairs of points $x'_0,\; y'_0$ and $x''_0,\; y''_0$ on the normalization of $\Ch_0$; under the involution $\mu_0: \Ch_0\to\Ch_0$ we have $\mu_0(x'_0)=x''_0,\;\mu_0(y'_0)=y''_0$. The differential $v_t$ on $\Ch_t$ degenerates to a meromorphic abelian differential $v_0$ on $\Ch_0$ with simple poles at the nodes of $\Ch_0$ with residues of the opposite signs.

For the cycles $a_1$ and $b_1$ on $C_t$ we denote their lifts to $\Ch_t$ by $a_1,a_1^*$ and $b_1,b_1^*$ as in (\ref{mainbasis}). According to (\ref{abm}), put
$$
\a_1^{-} =  \f{1}{2}(a_1-\mu_* a_1)\;,\qquad
\b_1^{-} =  b_1-\mu_* b_1
$$
and consider the homological coordinates corresponding to the cycles  $\a_1^{-}$ and  $\b_1^{-}$:
$$
z_1=\int_{\a_1^{-}} v =  \int_{a_1} v_t \;,\qquad
z_2=\int_{\b_1^{-}} v = 2 \int_{b_1} v_t\;.
$$
Similar to \cite{MRL} (cf. also \cite{Fay73}, pp.~50-52) we can take 
\be
t=\exp\left(2\pi\sqrt{-1}\;\cdot\;\f{\int_{b_1} v_t}{\int_{a_1} v_t}\right)=e^{\pi\sqrt{-1}\;z_2/z_1}
\label{loc0}
\ee
as a local coordinate on $\Mgno$ transversal to $D_0$. Moreover, we can 
assume that $z_1$ remains constant and that $\text{Im}\,z_2/z_1\to +\infty$ as $t\to 0$.
  
Let us denote by $\omega_{x,y}$ the abelian differential of the 3rd kind on $\Ch_0$ having simple poles at $x$ and $y$ with residues $+1$ and $-1$ respectively, and normalized to zero $a$-periods. Since $\mu^*v_0=-v_0$, we have 
\be
v_0=\f{z_1}{2\pi\sqrt{-1}}\; (\omega_{x'_0,\,y'_0}-\; \omega_{x''_0,\,y''_0})+ \text{holomorphic terms}\;.\label{a3}
\ee

To find the asymptotics of $\tau_\pm=\tau_\pm(C_t,q_t)$ near $D_0$ we notice first that, as $t\to 0$, the Bergman bidifferential $\Bh_t(x,y)$ on $\Ch_t\times\Ch_t$ tends to the Bergman  
bidifferential on $\Ch_0\times\Ch_0$ uniformly on compact subsets away from the diagonal. From the 
definition of the tau function we get
\be
\f{\p \log \tau_\pm}{\p z_2} =-\f{2}{\pi\sqrt{-1}}\left( \int_{a_1} \f{S_{\Bh_t}-S_{v_t}}{v_t}\pm
6 \int_{a_1} \f{\mu_y^*\Bh_t(x,y)|_{y=x}}{v_t}\right)\;. \label{dt}
\ee
In the limit $t\to 0$ the integrals over the vanishing cycle $a_1$ in (\ref{dt}) reduce to the residues
\be
-4\, {\rm Res}_{\,x'_0}\f{S_{\Bh_0}-S_{v_0}}{v_0} 
\pm 24\, {\rm Res}_{\,x'_0}\f{\mu_y^*\Bh_0(x,y)|_{y=x}}{v_0}\;.
\ee
Since $\mu_y^*\Bh_0(x,y)|_{y=x}$ remains non-singular as $x\to x'_0$, the second term vanishes.
To compute the first term we choose a coordinate $\zeta$ in a neighborhood of $x'_0$ with $\zeta(x'_0)=0$ such that $S_{\Bh_0}=0$. Then by (\ref{a3})
$$
v_0=\f{z_1}{2\pi\sqrt{-1}}\,\left(\f{1}{\zeta}+ O(1)\right)d\zeta\quad\text{as}\;\;\zeta\to 0\;,
$$
so that
$$
S_{v_0}= \left(\f{v_0'}{v_0}\right)' -\f{1}{2}\left(\f{v_0'}{v_0}\right)^2 =
\left(\f{1}{2\zeta^2}+ O(1)\right)\,d\zeta^2
$$
and
$$
\f{S_{v_0}}{v_0}= -\f{\pi\sqrt{-1}}{z_1}\left(\f{1}{\zeta}+ O(1)\right)d\zeta
$$
(here the prime means the derivative with respect to $\zeta$). Thus,
\be
\lim_{z_2\to \infty}\f{\p\log\tau_\pm(C_t,q_t)}{\p z_2}= \f{4\pi\sqrt{-1}}{z_1}\;,
\ee
which, together with (\ref{loc0}) implies that
\be
\tau_\pm(C_t,q_t)= t^{4} (\text{const} + o(1))\quad\text{as}\;\;t\to 0\;. 
\la{d0}
\ee

\subsection{Asymptotics of $\tau_\pm$ near $D_j$}
The divisor $D_0$ is the closure of the set of the isomorphism classes of pairs $(C,q)$ modulo the action of $\C^*$, where $C$ is a curve with a separating node consisting of two irreducible components $C'$ and $C''$ of genus $j$ and $g-j$ respectively, and $q$ is a regular meromorphic quadratic differential on $C$ with poles of order not higher than 2 at the preimages of the node with equal quadratic residues and holomorphic everywhere else. The restrictions of $q$ to $C'$ and $C''$ we will denote by $q'$ and $q''$ respectively. The dimension of the space of meromorphic quadratic differentials with a pole of order not higher than 2 on a curve of genus $j$ is $3j-1$, so that all fibers of the projection $p|_{D_j}: D_j\to \Delta_j\subset\Mb_g$ have the same dimension $3g-3$.

Consider a one parameter family $(C_t,q_t)\in\Mgn$ that is transversal to $D_j$ at some point $(C_0,q_0)\in D_j$. As $t\to 0$, a homologically trivial cycle on $C_t$, say $s$, contracts to a node of $C_0$, while $q_t$ degenerates to a regular meromorphic differential on $C_0$,  where both $q'$ and $q''$ have second order poles at the node with equal quadratic residues. The canonical cover $\Ch_t$ deforms into a reducible curve $\Ch_0$ with two nodes represented by two pairs of points $x'_0,\;y'_0$ and $x''_0,\;y''_0$, where  $x'_0,\,y'_0\in \Ch'_0$ and $x''_0,\,y''_0\in\Ch''_0$; under the involution $\mu_0: \Ch_0\to\Ch_0$ we have $\mu_0(x'_0)=y'_0,\;\mu_0(x''_0)=y''_0$. The differential $v_t$ on $\Ch_t$ degenerates to a pair of meromorphic abelian differentials $v'_0$ on $\Ch'_0$ and $v''_0$ on $\Ch''_0$ with simple poles at the nodes of $\Ch_0$ with residues of the opposite signs.

For a separating cycle $s$ on $C_t$ its lift to $\Ch_t$ consists of two cycles $s_1$ and $s_2$ such that
$\mu(s_1)=-s_2$. Therefore, we can assume that $s_1$=$\at_1$ as in (\ref{mainbasis}), and let $\bt_1$ be the corresponding $b$-cycle. Note that the classes of $\at_1$ and  $\bt_1$ lie in $H_-$. Consider the homological coordinates corresponding to the cycles  $\at_1$ and  $\bt_1$:
$$
z_1=\int_{\at_1} v_t \;,\qquad
z_2=\int_{\bt_1} v_t
$$
(rigorously speaking, we should write $z_{g+1}$ and $z_{g+2}$ as in (\ref{zi}) instead of $z_1$ and $z_2$). Analogously to (\ref{loc0}) we can show that
$t=e^{\pi\sqrt{-1}\;z_2/z_1}$
is a local coordinate on $\Mgno$ transversal to $D_j$. Moreover, we can also
assume that $z_1$ remains constant and that $\text{Im}\,z_2/z_1\to +\infty$ as $t\to 0$.

The rest is very similar to the case of $D_0$.
The derivative of $\tau_\pm=\tau_\pm(C_t,q_t)$ is given by the formula
\be
\lim_{z_2\to\infty}\f{\p\log\tau_\pm}{\p z_2}= -4\;{\rm Res}_{\,x'_0}\f{S_{\Bh_0}-S_{v_0}}{v_0}\;.
\ee
In the coordinate $\zeta$ near $x'_0$ such that $S_{\Bh_0}=0$ we have 
$$\f{S_{v_0}}{v_0}=\f{1}{v_0}\left(\left(\f{v_0'}{v_0}\right)'-\f{1}{2}\left(\f{v_0'}{v_0}\right)^2\right)
=-\f{d\zeta}{4\pi\sqrt{-1} z_1 \zeta}\;.$$
Therefore, 
$$
\lim_{z_2\to\infty}\f{\p\log\tau_\pm}{\p z_2}= -\f{\pi\sqrt{-1}}{z_1}\;{\rm Res}_{\zeta=0}\left(\f{1}{\zeta}+ O(1)\right)d\zeta\;,
$$
and, since $t=e^{\pi\sqrt{-1} z_2/z_1}$, we have 
\be
\tau_\pm = t^{4}(\text{const}+ o(1))\label{dj}
\ee
as $t\to 0$.

\begin{remark}
There is a subtlety for $j=1$ coming from the fact that nodal curves with an elliptic tail have non-trivial automorphism group $\Z/2\Z$, but this issue is taken care of in the definition of the divisor class $\delta_1$.
\end{remark}

\subsection{Proof of Theorem 1 and its consequences}
Combining Theorem \ref{sect} with the asymptotics (\ref{ddeg}), (\ref{d0}) and (\ref{dj}) of $\tau_\pm$ near $D_{deg},\; D_0$ and $D_j$, we get the formulas
\begin{align}
&48\,\l-\f{20}{3}(g-1)\,\psi=\f{2}{3}\, \delta_{deg}+4\sum_{j=0}^{[g/2]}\delta_j\;,
\la{PTMbarp}\\
&48\,\l_{P}-\f{44}{3}(g-1)\,\psi=\f{26}{3}\, \delta_{deg}+4\sum_{j=0}^{[g/2]}\delta_j\;,
\la{PTMbarm}
\end{align}
that immediately imply Theorem 1.

Excluding $\delta_{deg}$ from (\ref{PTMbarp}) and (\ref{PTMbarm}), we get
\begin{corollary}
The Prym class $\l_{P}$ on $P\Mgno$ is related to the Hodge class  $\l$, the tautological class $\psi$ and
the boundary classes $\d_j,\; j=0,\dots,[g/2]$, by the formula
\be
\l_{P}- 13 \l =-\sum_{j=0}^{[g/2]} \delta_j -\f{3}{2}(g-1)\psi\;.
\la{lptl}
\ee
\end{corollary}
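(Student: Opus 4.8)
The plan is to obtain \eqref{lptl} by eliminating the degeneration class $\delta_{deg}$ between the two relations \eqref{PTMbarp} and \eqref{PTMbarm}, since $\delta_{deg}$ is the unique generator of ${\rm Pic}(P\Mgno)\otimes\Q$ appearing in those two identities but absent from the target formula. All of the analytic input has already been produced: both \eqref{PTMbarp} and \eqref{PTMbarm} follow from Theorem \ref{sect} together with the boundary asymptotics \eqref{ddeg}, \eqref{d0} and \eqref{dj}, so what remains is a purely linear manipulation in the rational Picard group.

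First I would compare the coefficients of $\delta_{deg}$ in the two relations: it equals $\f{2}{3}$ in \eqref{PTMbarp} and $\f{26}{3}$ in \eqref{PTMbarm}. Since $\f{26}{3}=13\cdot\f{2}{3}$, scaling \eqref{PTMbarp} by the factor $13$ aligns the two $\delta_{deg}$-coefficients, so that $\delta_{deg}$ cancels under subtraction. Concretely, I would form the combination \eqref{PTMbarm} $-\,13\cdot$\eqref{PTMbarp}. On the right-hand side the $\delta_{deg}$ terms cancel and the coefficient of $\sum_{j=0}^{[g/2]}\delta_j$ becomes $4-13\cdot 4=-48$; on the left-hand side one collects $48\,\l_P-624\,\l$ together with the $\psi$-contribution $-\f{44}{3}(g-1)+13\cdot\f{20}{3}(g-1)=72(g-1)$.

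Collecting these terms, the combination reads
\be
48\,\l_{P}-624\,\l+72(g-1)\psi=-48\sum_{j=0}^{[g/2]}\delta_j\;,
\ee
and dividing through by $48$ and rearranging yields exactly \eqref{lptl}. There is essentially no obstacle at this stage; the only point requiring care is the bookkeeping of the rational coefficients. As a consistency check, the resulting normalization (the factor $\f{3}{2}(g-1)$ in front of $\psi$, together with the constant $13$) must reproduce Mumford's relation \eqref{M} for $n=2$ under the identifications $\l=p^*\l_1$ and $\pt=p^*\l_2-\f{3g-3}{2}\psi$ recorded after Theorem \ref{theoint}, which confirms that the elimination has been carried out correctly.
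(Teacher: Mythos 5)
Your proposal is correct and is precisely the paper's own argument: the paper derives \eqref{lptl} by ``excluding $\delta_{deg}$ from (\ref{PTMbarp}) and (\ref{PTMbarm})'', which is exactly the linear combination \eqref{PTMbarm}$\,-\,13\cdot$\eqref{PTMbarp} that you carry out, and your coefficient bookkeeping (cancellation of $\delta_{deg}$, the $-48$ on the boundary classes, and $72(g-1)\psi$ on the left) is accurate. The only difference is that the paper leaves the arithmetic implicit while you spell it out.
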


The Prym class $\l_{P}$ is closely related to the Mumford class 
$\l_2=c_1(\pi_*\omega_g^2)$ on the moduli space of curves $\Mb_g$ (in other words, $\l_2$ is the
determinant of the vector bundle $p:\Mgno\to\Mb_g$). For each $(C,q)\in\Mgn$ there is
an isomorphism between the space $H^0(C,T^*C^2)$ of holomorphic quadratic differentials on $C$ and the 
space $\Lambda_-$ of Prym differentials on $\Ch$ defined as follows:
\be
u= \frac{f^*w}{v}\in \Lambda_-\;,\qquad w\in H^0(C,T^*C^2)\;,
\la{l2pt}
\ee
where $f:\Ch \to C$ is the canonical projection and $v^2=f^*q$. It easy to check in local coordinates that
$u$ is holomorphic and that $f^*u=-u$. Under the multiplication $q\mapsto \e q$ 
 the differential $u$ transforms like $u\mapsto \e^{-1/2}u$. Since the dimension of $\Lambda_-$ equals $3g-3$, this implies that $\pt= p^*\l_2-\f{3g-3}{2} \psi$. Thus, (\ref{lptl}) yields
\begin{corollary}
We have
\be
p^*\l_2 - 13 p^*\l_1 =-\sum_{j=0}^{[g/2]} \delta_j \;.
\ee
\end{corollary}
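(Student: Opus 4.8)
The plan is to deduce the corollary from equation (\ref{lptl}) of the preceding corollary together with the isomorphism (\ref{l2pt}) that identifies the Prym bundle $\Lambda_-$ with the pullback of the vector bundle whose determinant is $\l_2$, up to a twist by the tautological bundle. The key observation is that (\ref{lptl}) is a relation in the rational Picard group of the \emph{projectivized} space $P\Mgno$, and it contains the terms $-\f{3}{2}(g-1)\psi$ and $-\sum_j\delta_j$. Once I establish the precise relationship between $\pt$ and $p^*\l_2$, the $\psi$-term will cancel, leaving a clean relation between the pullbacks of the Mumford classes $\l_1$ and $\l_2$ from $\Mb_g$.

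First I would make precise the map (\ref{l2pt}): for a generic pair $(C,q)\in\Mgn$, the assignment $w\mapsto u=f^*w/v$ sends a holomorphic quadratic differential $w$ on $C$ to a Prym differential $u$ on $\Ch$, since in local coordinates one checks directly that the zeros of $v$ (the branch points) are cancelled by the vanishing of $f^*w$ so that $u$ is holomorphic, and that $\mu^*u=-u$ because $\mu^*v=-v$. This gives a fiberwise linear isomorphism $p^*\pi_*\omega_g^2 \xrightarrow{\sim} \Lambda_-$. The scaling behavior $u\mapsto\e^{-1/2}u$ under $q\mapsto\e q$ records exactly how the $\C^*$-action enters: the tautological bundle $L$ carries the weight, so comparing determinants over the $(3g-3)$-dimensional fibers yields the relation $\pt=c_1(\Lambda_-)=p^*\l_2-\f{3g-3}{2}\psi$ in $\mathrm{Pic}(P\Mgno)\otimes\Q$.

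Next I would substitute this expression for $\pt$ together with $\l=p^*\l_1$ (noted already in the paragraph following Theorem \ref{theoint}) into (\ref{lptl}). The left-hand side becomes
\[
\pt-13\l=p^*\l_2-\f{3g-3}{2}\psi-13\,p^*\l_1,
\]
while the right-hand side is $-\sum_j\delta_j-\f{3}{2}(g-1)\psi$. The two copies of $-\f{3g-3}{2}\psi$ on either side cancel identically, and I am left with $p^*\l_2-13\,p^*\l_1=-\sum_{j=0}^{[g/2]}\delta_j$, which is exactly the assertion.

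The only genuinely delicate step is verifying the normalization $\pt=p^*\l_2-\f{3g-3}{2}\psi$, i.e.\ pinning down the coefficient $\f{3g-3}{2}$ of the tautological twist. This requires being careful about two things: that the isomorphism (\ref{l2pt}) is an isomorphism of \emph{bundles} (not merely fiberwise vector spaces) so that determinants may be compared, and that the half-integer weight $-1/2$ in $u\mapsto\e^{-1/2}u$, summed over the $\dim\Lambda_-=3g-3$ basis directions, produces precisely $-\f{1}{2}(3g-3)\psi=-\f{3g-3}{2}\psi$ once one descends to the projectivization $P\Mgno$ where $\psi=c_1(L)$. This bookkeeping of the $\C^*$-weight is where all the content lies; the remainder of the argument is the linear cancellation described above.
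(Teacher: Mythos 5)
Your proposal is correct and follows essentially the same route as the paper: both derive the normalization $\pt=p^*\l_2-\f{3g-3}{2}\psi$ from the isomorphism $w\mapsto f^*w/v$ of (\ref{l2pt}), using the $\C^*$-weight $-1/2$ over the $(3g-3)$-dimensional fibers, and then substitute into (\ref{lptl}) together with $\l=p^*\l_1$ so that the $\psi$-terms cancel. Your explicit attention to the bundle-level (rather than merely fiberwise) nature of the isomorphism and to the weight bookkeeping is exactly the content the paper compresses into one sentence.
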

This is the famous Mumford's relation from \cite{M} pulled back to $P\Mgno$ from $\Mb_g$ by means of the projection $p:P\Mgno\to\Mb_g$.

{\bf Acknowledgements.} Both authors acknowledge the hospitality of the Max-Planck-Institut f\"ur Mathematik in Bonn and thank A.~Kokotov for valuable discussions. PZ is grateful to SCGP (Stony Brook) and QGM (Aarhus) for support and stimulating working conditions.  We would like to thank the referee for many useful comments and
suggestions.

%    Bibliographies can be prepared with BibTeX using amsplain,
%    amsalpha, or (for "historical" overviews) natbib style.
\bibliographystyle{amsplain}
%    Insert the bibliography data here.

\end{document}